\documentclass{amsart}
\usepackage{amsmath,amsthm,amssymb}
\usepackage{mathtools}
\usepackage{tikz-cd}
\usepackage{graphicx}

\usepackage{times}

\DeclareFontShape{OT1}{cmtt}{m}{n}
     {%
      <5-8.99>cmtt8<9-9.99>cmtt9%
      <10-11.99>cmtt10%
      <12-25>cmtt12%
      }{}
\DeclareMathAlphabet{\mathtt}{OT1}{cmtt}{m}{n}

\definecolor{linkblue}{RGB}{1,1,190}
\definecolor{citered}{RGB}{190,1,1}

\usepackage[linkcolor=linkblue
           ,urlcolor=linkblue
           ,citecolor=citered
           ,colorlinks
           ,bookmarksopen=true,
           ]{hyperref}

\theoremstyle{plain}
\newtheorem{theorem}{Theorem}
\newtheorem{proposition}[theorem]{Proposition}
\newtheorem{lemma}[theorem]{Lemma}
\newtheorem{corollary}[theorem]{Corollary}

\theoremstyle{definition}
\newtheorem{definition}[theorem]{Definition}

\newcommand{\cB}{\mathcal{B}}
\newcommand{\cI}{\mathcal{I}}
\newcommand{\cL}{\mathcal{L}}
\newcommand{\bN}{\mathbb{N}}
\newcommand{\bQ}{\mathbb{Q}}
\newcommand{\bZ}{\mathbb{Z}}

\newcommand{\sL}{\mathsf{L}}

\newcommand{\defit}[1]{\textbf{#1}}

\DeclareMathOperator{\Cl}{Cl}

\DeclareMathOperator{\Max}{Max}

\DeclareMathOperator{\im}{im}

\newcommand{\quo}{\mathbf{Q}}

\DeclarePairedDelimiter{\card}{\lvert}{\rvert}

\newcommand{\zssprod}{\mathbin{\raisebox{0.2ex}{\scalebox{0.6}{$\bullet$}}}}

\author{Daniel Smertnig}
\address{Faculty of Mathematics and Physics (FMF)\\
  University of Ljubljana
  and Institute of Mathematics, Physics and Mechanics (IMFM)\\
  Jadranska ulica 21\\
  1000 Ljubljana, Slovenia}
\email{daniel.smertnig@fmf.uni-lj.si}
\keywords{factorization theory, divisor homomorphisms, transfer homomorphisms, Dedekind prime rings, hereditary noetherian prime rings}

\begin{document}

\title{Divide and Transfer: Non-Unique Factorizations Beyond Commutativity}
\markright{Non-Unique Factorizations Beyond Commutativity}

\begin{abstract}
Unique factorization fails in many rings and monoids, but divisor and transfer homomorphisms provide tools to understand non-unique factorizations.
In this expository article, we first explore these notions in the classical setting of commutative Dedekind domains, where monoids of zero-sum sequences appear as a natural combinatorial model.
We then adapt these ideas to the setting of noncommutative Dedekind prime rings using module-theoretic methods.
Going a step further, we discuss Rump and Yang's recent divisor theory for ideals in hereditary noetherian prime rings, where divisors can be visualized in a diagrammatic calculus.
\end{abstract}

\maketitle

\section{Introduction}

Decomposing mathematical objects into their simplest building blocks is a theme almost as old as mathematics itself.
Euclid proved that every natural number factors uniquely into primes.
Kummer discovered that unique factorization fails in rings of cyclotomic integers $\bZ[\zeta]$ in the 19\textsuperscript{th} century, famously collapsing Lamé's attempt at proving Fermat's Last Theorem.
This ultimately led Dedekind to define ideals, a concept now pervasive throughout algebra. Ideals restore unique factorization at a higher level of abstraction in rings of algebraic integers.

Looking beyond numbers, we decompose other objects:
finite groups have unique composition factors by the Jordan-Hölder Theorem, leading to finite simple groups as building blocks of finite groups.
Modules over a ring can often be decomposed into direct sums of indecomposable modules. 
Sometimes, such as over finite-dimensional algebras over a field, the Krull--Remak--Schmidt--Azumaya Theorem guarantees the uniqueness of such decompositions \cite{baeth-wiegand13}.
More often, these decompositions are non-unique.
Permutations factor as products of transpositions, orthogonal matrices as products of reflections.
Singular matrices factor as products of idempotent matrices \cite{erdos67,cossu-tringali24}.

Two basic questions arise: what are the building blocks, and how do they combine to make up the original objects, especially in the non-unique case?
For the second question, the ideas of Kummer and Dedekind provide a blueprint.
By moving to a different level of abstraction, perhaps, we can further break up the simple objects into even smaller pieces, restoring uniqueness.
The study of non-unique factorizations then becomes the study of how these pieces can be assembled to form the original objects.

Divisor and transfer homomorphisms are powerful tools in this endeavor.
Transfer homomorphisms were defined by Halter-Koch, formalizing an idea that Nar\-kie\-wicz used for rings of algebraic integers.
In Section~\ref{sec:commutative}, we will start our journey in this expository article by introducing these notions in the classical setting of commutative Dedekind domains.
Detailed accounts are left to \cite{baginski-chapman11,geroldinger16,geroldinger-zhong20} and the books \cite{geroldinger-halter-koch06,geroldinger09} \cite[Ch.~9]{narkiewicz04}.

In Section~\ref{sec:noncommutative} we will then venture into the noncommutative realm, meeting De\-de\-ki\-nd prime rings as a natural generalization of commutative Dedekind domains.
There, we will see how the ideas can be adapted to a noncommutative setting, based on \cite{smertnig19}.

Concepts from commutative algebra typically have several interesting generalizations to noncommutative algebra.
Hereditary noetherian prime rings (HNP rings) form a strictly larger class than Dedekind prime rings that also coincides with Dedekind domains in the commutative setting.

In Dedekind prime rings, two-sided ideals surprisingly still factor uniquely as \emph{commuting} products of maximal ideals, exactly as in the commutative case.
This is no longer true in HNP rings: these rings allow us a peek into a world in which the factorization of ideals itself fails to be unique, but only ever so slightly.
While HNP rings are classical objects in noncommutative ring theory and integral group representations, the multiplicative structure of their ideals remained a mystery until very recently.

Ten years ago, Rump and Yang introduced a divisor theory that fully describes the structure of ideals in HNP rings \cite{rump-yang16,rump22,rump25}.
The multiplication of ideals is described in terms of the natural noncommutative operation of function composition.
Despite the seemingly abstract setting, pleasingly, these divisors can be visualized as simple diagrams, and ideal multiplication translates into a straightforward gluing of such diagrams.
A glimpse at Rump and Yang's beautiful theory will conclude our short journey in Section~\ref{sec:hnp}.
The visualization is the only minor new contribution of this article.

\subsection{Setting out}
Throughout, we consider the most basic type of factorizations: factorizations of elements of a ring into atoms, also called irreducible elements.
We restrict to non-zero-divisors, where the theory is most fruitful.

If $R$ is a ring, let $R^\bullet$ be its multiplicative monoid of non-zero-divisors.
Most commonly, the ring $R$ will be a domain, then $R^\bullet=R \setminus \{0\}$.
Later it will be convenient to admit some rings with zero-divisors, such as the matrix ring $M_n(\bZ)$. 

A non-invertible $a \in R^\bullet$ is an \defit{atom} if $a = bc$ with $b$,~$c \in R^\bullet$, implies that $b$ or $c$ is invertible. 
A factorization of $r \in R^\bullet$ is then a representation of the form $r=a_1\cdots a_n$ with $a_i$ atoms.
Similar definitions apply if $H$ is any multiplicative monoid.

Let us start with an easy example.
Let $\alpha \coloneqq \frac{1 + \sqrt{-23}}{2}$, and
\[
D \coloneqq \bZ[\alpha] \coloneqq \{\,  a + b \alpha : a, b \in \bZ \,\}.
\]
In this ring, which is the ring of integers of the imaginary quadratic number field $\bQ(\sqrt{-23})$, we have the following factorizations of $8$:
\begin{equation} \label{eq:exm-factor-8}
8 = 2 \cdot 2 \cdot 2 = \frac{3 + \sqrt{-23}}{2} \cdot \frac{3 - \sqrt{-23}}{2}.
\end{equation}

To see that the factors are atoms, we can note that the field norm $N\colon \bQ(\sqrt{-23}) \to \bQ$, mapping $x+y \sqrt{-23}$ to $x^2 + 23y^2$ ($x$,~$y \in \bQ$), restricts to a multiplicative map $D \to \bN_0, $ with $N(a + b \alpha)=a^2 + ab + 6b^2$.
Since $N(2) = 4$ and $N\bigl(\frac{3 \pm \sqrt{-23}}{2}\bigr) = 8$, it suffices to observe that $N$ does not take the value $2$ on $D$.

Note that the two factorizations in \eqref{eq:exm-factor-8} do not even have the same length!
To understand what is going on, we need to consider ideals.

\section{Familiar Landscapes: Commutative Dedekind Domains} \label{sec:commutative}

In a commutative Dedekind domain $R$, every nonzero ideal $I$ factors uniquely into a product of prime ideals \cite[Ch.~9]{atiyah-macdonald69}:
\[
I = P_1^{e_1} P_2^{e_2} \cdots P_n^{e_n}.
\]
In fact, the existence of such factorizations is one of the many equivalent definitions of Dedekind domains \cite[\S V.6]{zariski-samuel58}\cite[\S20.4]{clark-ca}. Rings of integers in number fields, such as the ring $D$ we just encountered, are prototypical examples of Dedekind domains.

Writing $(a)$ for the principal ideal generated by $a \in D$, we can further factor the atoms in our example into two distinct prime ideals $P$ and $Q$:\footnote{We do not discuss how to obtain these factorizations. The interested reader is referred to any introductory text on algebraic number theory.
The actual computation can be performed using a computer algebra system such as SageMath, PARI/GP, or Magma.}
\[
(2) = PQ, \quad \bigg(\frac{3 + \sqrt{-23}}{2}\bigg) = P^3, \quad \bigg(\frac{3 - \sqrt{-23}}{2}\bigg) = Q^3,
\]
and $(8) = P^3 Q^3$.

Here $P$ and $Q$ are non-principal ideals, while $PQ$, $P^3$, and $Q^3$ are principal.
The two different factorizations of $8$ into atoms correspond to different groupings of the prime ideals into products that are principal.
Let us formalize this idea.

Let $\Max(R)$ denote the set of nonzero maximal ideals of $R$.
In a Dedekind domain, these coincide with the nonzero prime ideals.
To multiply two ideals, represented as products of maximal ideals, we simply add up the exponents of the corresponding maximal ideals.
Denoting by $\cI(R)$ the monoid of nonzero ideals, this means that there is an isomorphism
\[
     (\cI(R),\cdot) \to (\bN_0^{(\Max(R))}, +),\quad P_1^{e_1}\cdots P_n^{e_n} \mapsto e_1 P_1 + \cdots + e_n P_n,
\]
with the sum understood to be formal (and not the addition of ideals).
We view elements of $\bN_0^{(\Max(R))}$ as (effective) \defit{divisors}: formal sums of maximal ideals with nonnegative integer coefficients.

Composing the isomorphism with the natural monoid homomorphism $R^\bullet \to \cI(R)$ that maps $a$ to the principal ideal $(a)$, we obtain a monoid homomorphism
\[
    \partial: R^\bullet \to \bN_0^{(\Max(R))}.
\]
For example, we have $\partial(8) = 3P + 3Q$.

It is easily checked that $\partial(a)=\partial(a')$ if and only if $a'=au$ for some invertible $u \in R$, that is, if and only if $a$ and $a'$ are \defit{associates}.
Since factorizations are only interesting up to permutation and associates, the map $\partial$ preserves all relevant factorization-theoretic information.

Divisors reduce multiplication of elements to the addition of nonnegative integer vectors.
To study the factorizations of $a$ through $\partial(a)$ one obstacle remains: we need to be able to recognize which divisors are principal, that is, are in the image of $\partial$.
This leads us to divisor homomorphisms.

Since $\partial$ is a monoid homomorphism, if $a$ divides $b$ (meaning $b=ac$ for some $c$), then $\partial(a)$ divides $\partial(b)$.
Divisibility in the additive monoid $\bN_0^{(\Max(R))}$ means that $\partial(a)$ is a summand of $\partial(b)$.
This is equivalent to $\partial(a) \le \partial(b)$ in the component-wise comparison of the coefficients.
The map $\partial$ also enjoys the converse property.

\begin{definition}
    Let $H$, $D$ be commutative cancellative monoids.
    A homomorphism $\varphi \colon H \to D$ is a \defit{divisor homomorphism} if whenever $\varphi(a)$ divides $\varphi(b)$ in $D$, then $a$ divides $b$ in $H$ (for all $a$,~$b \in H$).
\end{definition}

\begin{proposition}
    The map $\partial \colon R^\bullet \to \bN_0^{(\Max(R))}$ is a divisor homomorphism.
\end{proposition}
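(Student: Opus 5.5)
The plan is to reduce the statement to the familiar fact that, in a Dedekind domain, containment of nonzero ideals is the same as divisibility of ideals. Throughout we use the unique factorization of nonzero ideals into maximal ideals, as recalled above, and the isomorphism $\cI(R)\cong \bN_0^{(\Max(R))}$.

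First suppose $a$,~$b \in R^\bullet$ satisfy $\partial(a) \le \partial(b)$. Write $\partial(a)=\sum_P e_P P$ and $\partial(b)=\sum_P f_P P$, so that $e_P \le f_P$ for all $P$. Define the ideal
\[
J \coloneqq \prod_P P^{f_P - e_P}.
\]
Because $\cI(R)\to\bN_0^{(\Max(R))}$ is an isomorphism of monoids, we get $(a)J = (b)$. In particular, $(b) \subseteq (a)J \subseteq (a)$, since $J \subseteq R$. So $b \in (a)$, which means $b = ac$ for some $c \in R$.

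Next we check that $c \in R^\bullet$. Since $b \ne 0$, we have $c \ne 0$, and as $R$ is a domain this puts $c$ in $R^\bullet$. Hence $a$ divides $b$ in $R^\bullet$, which is exactly the divisor homomorphism property.

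One could instead try to show $J=(c)$ using cancellation of invertible ideals. That route is unnecessary, because the containment $(b)\subseteq(a)$ already gives divisibility.

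The only point that needs care is the implication ``$(a)$ divides $(b)$ as ideals $\Rightarrow$ $(b)\subseteq (a)$.'' This holds simply because ideals are contained in $R$. So the real content of the proposition lies entirely in the unique factorization of ideals, which the paper takes as given.
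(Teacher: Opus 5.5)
Your proposal is correct and follows essentially the same route as the paper. From $\partial(a) \le \partial(b)$ you write $(b) = (a)J$ with $J$ a product of maximal ideals, deduce $(b) \subseteq (a)$, and conclude $b = ac$ with $c \in R^\bullet$ (your explicit check that $c \neq 0$ simply spells out a step the paper leaves implicit).
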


\begin{proof}
    Suppose $\partial(a) \le \partial(b)$.
    Then $(b) = P_1 \cdots P_n (a)$ with prime ideals $P_i$.
    This implies $(b) \subseteq (a)$, and so $b = ac$ for some $c \in R^\bullet$.
\end{proof}

The usual pair construction allows us to embed any commutative cancellative monoid $H$ into a group $\quo(H)$, its group of fractions.
Elements of $\quo(H)$ take the form $ab^{-1}$ with $a$,~$b \in H$.

\begin{definition}
    If $\varphi\colon H \to D$ is a divisor homomorphism, then its \defit{class group} is $\Cl(\varphi) \coloneqq \quo(D)/\quo(\im \varphi)$.
\end{definition}

\begin{lemma} \label{l:div-hom-image}
    For $d \in D$, it holds that $d \in \im\varphi$ if and only if $[d]=0$ in $\Cl(\varphi)$.
\end{lemma}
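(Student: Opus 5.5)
The forward direction is immediate. If $d = \varphi(a)$ with $a \in H$, then $d \in \im\varphi \subseteq \quo(\im\varphi)$, so $[d]=0$ in $\Cl(\varphi)$.

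The substance lies in the converse, and the plan is to reduce it to the defining property of a divisor homomorphism. Assume $[d]=0$, so $d \in \quo(\im\varphi)$. Since $\im\varphi$ is a submonoid of $D$, its group of fractions sits inside $\quo(D)$ and consists of quotients of elements of $\im\varphi$. We may therefore write $d = \varphi(a)\varphi(b)^{-1}$ with $a$,~$b \in H$. Equivalently, $\varphi(a) = d\,\varphi(b)$ holds in $D$; this is an identity in $\quo(D)$ between elements of $D$, hence an identity in $D$ by cancellativity.

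This identity says exactly that $\varphi(b)$ divides $\varphi(a)$ in $D$, with cofactor $d \in D$. Because $\varphi$ is a divisor homomorphism, $b$ divides $a$ in $H$, so $a = bc$ for some $c \in H$. Applying $\varphi$ gives $\varphi(b)\varphi(c) = \varphi(a) = \varphi(b)\, d$. Cancelling $\varphi(b)$ in the cancellative monoid $D$ yields $d = \varphi(c) \in \im\varphi$.

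The main point to watch is the step where $[d]=0$ is translated into divisibility within $D$ rather than merely in $\quo(D)$. Here it matters that $d$ itself lies in $D$, so the cofactor witnessing $\varphi(b) \mid \varphi(a)$ belongs to $D$. Commutativity of $D$ is used implicitly to write elements of $\quo(\im\varphi)$ as a single quotient and to cancel freely.
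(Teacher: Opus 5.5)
Your proof is correct and follows the paper's argument: you write $d=\varphi(a)\varphi(b)^{-1}$, read $d\,\varphi(b)=\varphi(a)$ as divisibility in $D$, use the divisor-homomorphism property to get $a=bc$, and cancel $\varphi(b)$ to conclude $d=\varphi(c)$. You also spell out the easy forward direction and the fact that the cofactor $d$ lies in $D$, both of which the paper leaves implicit.
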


\begin{proof}
    The non-trivial direction is that $[d]=0$ implies $d \in \im \varphi$.
    If $[d]=0$, then $d = \varphi(a) \varphi(b)^{-1}$ for some $a$,~$b \in H$.
    Multiplying, this means $d \varphi(b) = \varphi(a)$.
    Since $\varphi$ is a divisor homomorphism, then $a=cb$ with $c \in H$.
    Now $\varphi(a)=\varphi(c)\varphi(b)=d\varphi(b)$ implies $d = \varphi(c)$ by cancellativity of $D$.
\end{proof}

Applied to $\partial$ we now have a way to recognize principal divisors.
In our example $\Cl(\partial) \cong \bZ/3\bZ = \{\overline{0}, \overline{1}, \overline{2}\}$.
The prime ideals $P$ and $Q$ represent the two non-trivial classes $\overline{1}$ and $\overline{2}$, respectively.
Mapping each prime ideal in our running example to its class, we see that $(2)=PQ$ maps to $\overline 1 \zssprod \overline 2$, a formal product of the two classes (not their sum in the class group!) which we call a sequence.
Similarly,
\[
\bigg(\frac{3 + \sqrt{-23}}{2}\bigg) = P^3 \mapsto \overline 1 \zssprod \overline 1 \zssprod \overline 1, \qquad \bigg(\frac{3 - \sqrt{-23}}{2}\bigg) = Q^3 \mapsto \overline 2 \zssprod \overline 2 \zssprod \overline 2.
\]

Observe that $\overline 1 + \overline 2 = \overline 1 + \overline 1 + \overline 1 = \overline 2 + \overline 2 + \overline 2 = \overline 0$ in $\Cl(\partial)$, reflecting the fact that the corresponding ideals are principal.
Moreover, we can see that no non-trivial subsequence of these three sequences adds up to $\overline{0}$.
This shows that the elements are atoms: for instance, any non-trivial factor of $\frac{3+\sqrt{-23}}{2}$ would give rise to a subsequence of the length three sequence $\overline{1}^3$ whose classes add up to $\overline 0$, but the only non-trivial subsequences of $\overline{1}^3$ are $\overline{1}$ and $\overline{1}^2$, for which this is not true.

The number $8$ maps to $\overline 1^3 \zssprod \overline 2^3$.
The different factorizations of $8$ into atoms in $D$ (up to permutation and associates) correspond precisely to the different factorizations of this sequence into minimal zero-sum sequences:
\[
\overline 1^3 \zssprod \overline 2^3 = (\overline 1 \zssprod \overline 2) \zssprod (\overline 1 \zssprod \overline 2) \zssprod (\overline 1 \zssprod \overline 2) = (\overline 1^3) \zssprod (\overline 2^3).
\]
The combinatorial model now easily confirms that the two factorizations of $8$ are the only possible ones.

We can make this approach rigorous.
Let $(G,+)$ be any abelian group (think of it as the class group), and let $G_0$ be a subset of $G$ (think of it as the set of classes containing prime ideals).
A \defit{sequence} over $G_0$ is a finite collection of elements from $G_0$, where repetition is allowed, and the order of elements does not matter.
We use multiplicative notation for sequences. 
For instance, the sequence $S=g_1 \zssprod g_2^2 \zssprod g_3$ consists of one copy of $g_1$, two copies of $g_2$, and one copy of $g_3$.
Sequences can be multiplied by concatenation.
Formally, the sequences over $G_0$ form the multiplicatively written free abelian monoid with basis $G_0$.

Aside from the formal multiplication of sequences, we can also add up the elements of a given sequence using the group operation of $G$: the \defit{sum} of $S$ as above is $\sigma(S) \coloneqq g_1 + 2g_2 + g_3 \in G$.
A \defit{zero-sum sequence} is one whose sum is $0 \in G$.

\begin{definition}
    The \defit{monoid of zero-sum sequences} over $G_0$, denoted by $\cB(G_0)$, is the monoid of all zero-sum sequences over $G_0$, with the product of two sequences defined to be their concatenation.
\end{definition}

Going back to our example, we have $G=G_0=\bZ/3\bZ$.
Then $\overline{1} \zssprod \overline 2$, $\overline{1}^3$, and $\overline{2}^3$ are zero-sum sequences in $\cB(G_0)$.
The sequence of length one containing the identity element $\overline 0$ is also a zero-sum sequence.
We leave it to the reader to check that these are \emph{all} the \defit{minimal zero-sum sequences}: non-empty zero-sum sequences that contain no non-empty proper zero-sum subsequence.
Every zero-sum sequence can clearly be factored into minimal zero-sum sequences, so the minimal zero-sum sequences are precisely the atoms of $\cB(G_0)$.

In the example we saw that $\cB(\bZ/3\bZ)$ provides a simplified model for factorizations in $D$.
For instance, it tells us about the structure of the factorizations of $8$, but does not allow us to recover the actual factors.
Clearly it is much easier to understand $\cB(\bZ/3\bZ)$ than it is to understand factorizations in $D$ directly.
Let us make this approach precise.

\begin{definition}
    Let $H$ be a cancellative monoid and $D$ be a reduced cancellative monoid (that is, with $1$ as the only invertible element).
    A \defit{transfer homomorphism} is a monoid homomorphism $\theta \colon H \to D$ satisfying the following properties:
    \begin{enumerate}
    \item[(T1)]\label{th:t1} $\theta$ is surjective and if $\theta(a) =1$, then $a$ is invertible in $H$.
    \item[(T2)]\label{th:t2} For all $a \in H$ and all expressions $\theta(a) = d_1d_2$ with $d_1$,~$d_2 \in D$, there exist $b_1$,~$b_2 \in H$ such that $a = b_1 b_2$ and $\theta(b_i) = d_i$.
    \end{enumerate}
\end{definition}

The second property allows us to lift factorizations.
Transfer homomorphisms preserve key factorization data.
For instance, it is easy to check that $a\in H$ is an atom if and only if $\theta(a)$ is an atom in $D$.
We will see some more consequences shortly, but first we observe that our homomorphism $\partial \colon R^\bullet \to \bN_0^{(\Max(R))}$ indeed induces a transfer homomorphism to a monoid of zero-sum sequences.

\begin{theorem} \label{t:transfer}
    If $\varphi\colon H \to D$ is a divisor homomorphism into a free abelian monoid $D=\bN_0^{(P)}$, then it induces a transfer homomorphism $\theta \colon H \to \cB(G_0)$ where $G_0 \coloneqq \{\, [p] : p \in P \,\} \subseteq \Cl(\varphi)$ is the set of classes containing prime divisors.\footnote{A divisor homomorphism, and hence its class group, is not uniquely determined by $H$.
    This can be remedied by further restricting $\varphi$ to be a divisor theory.
    The interested reader can refer to \cite[\S2.4]{geroldinger-halter-koch06}.
    Our map $\partial$ is a divisor theory.}
\end{theorem}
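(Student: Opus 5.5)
We define $\theta$ as the composition of $\varphi$ with the homomorphism $\beta\colon \bN_0^{(P)} \to \mathcal{F}(G_0)$ into the free abelian monoid of sequences over $G_0$. Here $\beta$ sends the basis element $p$ to the length-one sequence $[p]$, extended multiplicatively, so that $\beta(e_1p_1+\dots+e_np_n)=[p_1]^{e_1}\zssprod\cdots\zssprod[p_n]^{e_n}$. The sum of $\beta(d)$ is $\sigma(\beta(d)) = [d]\in\Cl(\varphi)$. For $a\in H$ we have $[\varphi(a)]=0$, so $\theta(a)=\beta(\varphi(a))$ is a zero-sum sequence, and $\theta\colon H\to\cB(G_0)$ is a monoid homomorphism. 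Also $\cB(G_0)$ is reduced and cancellative, being a submonoid of a free abelian monoid whose only unit is the empty sequence.

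The key observation is a lifting statement. Suppose $d\in\bN_0^{(P)}$ satisfies $\beta(d)=S$ and we are given a factorization $S=S_1\zssprod S_2$ into zero-sum sequences. Then $d=d_1+d_2$ with $\beta(d_i)=S_i$. To construct this, write $d$ as a formal sum of primes $p_1+\dots+p_m$ listed with multiplicity, so that $S=[p_1]\zssprod\cdots\zssprod[p_m]$. Since $S_1$ is a subsequence of $S$, we can choose a sub-multiset of the $p_j$ whose classes give exactly $S_1$; let $d_1$ be its sum and $d_2=d-d_1$. The $d_i$ are in general not determined by $S_i$; this is exactly why $\theta$ is only a transfer homomorphism and not an isomorphism.

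For (T1), surjectivity: take $S=g_1\zssprod\cdots\zssprod g_m\in\cB(G_0)$ and choose $p_j\in P$ with $[p_j]=g_j$, which is possible by the definition of $G_0$. Put $d=p_1+\dots+p_m$. Then $[d]=\sigma(S)=0$, so $d\in\im\varphi$ by Lemma~\ref{l:div-hom-image}, and $S=\beta(d)\in\im\theta$. If $\theta(a)=1$, then $\varphi(a)=0$, since $\beta(d)$ has length equal to the total degree of $d$. Hence $\varphi(a)$ divides $\varphi(1)$, and the divisor homomorphism property gives $a\mid 1$, so $a$ is invertible.

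For (T2), suppose $\theta(a)=S_1\zssprod S_2$ with $S_i\in\cB(G_0)$. By the lifting statement, $\varphi(a)=d_1+d_2$ with $\beta(d_i)=S_i$. Then $[d_1]=\sigma(S_1)=0$, so Lemma~\ref{l:div-hom-image} gives $d_1=\varphi(b_1)$. Since $\varphi(b_1)$ divides $\varphi(a)$, the divisor homomorphism property yields $a=b_1b_2$ for some $b_2\in H$. Cancelling $\varphi(b_1)$ in $\varphi(a)=\varphi(b_1)+\varphi(b_2)$ gives $\varphi(b_2)=d_2$. Hence $\theta(b_i)=S_i$, as required.

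The main obstacle is this step, and in particular making the lifting of the subsequence $S_1$ to a sub-divisor $d_1$ precise. Once the sum map $\sigma\circ\beta$ is identified with the class map $d\mapsto[d]$, everything else follows from Lemma~\ref{l:div-hom-image} and the definition of a divisor homomorphism.
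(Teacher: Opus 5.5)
Your proof is correct and follows the paper's argument essentially verbatim. It defines $\theta=\beta\circ\varphi$, proves (T1) by applying the divisor homomorphism property to $\varphi(a)=0=\varphi(1)$, and proves (T2) by lifting $S_1\zssprod S_2$ to $\varphi(a)=d_1+d_2$ and applying Lemma~\ref{l:div-hom-image} to $d_1$ before using the divisor homomorphism property again. You also make explicit some points the paper leaves implicit, notably that surjectivity needs Lemma~\ref{l:div-hom-image} too (a lift $d$ of a zero-sum sequence has $[d]=0$, hence $d\in\im\varphi$); this clarifies the argument rather than changing the route.
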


\begin{proof}
    The map $\theta$ is defined as $\pi \circ \varphi$ with $\pi(d_1 + \dots + d_n) = [d_1] \zssprod \cdots \zssprod [d_n]$ for $d_i \in P$.
    For \hyperref[th:t1]{(T1)} first note that $\theta$ is surjective by choice of $G_0$.
    If $\theta(a)=1$, then $\varphi(a) = 0 = \varphi(1)$. 
    Since $\varphi$ is a divisor homomorphism, this means $a$ divides $1$, hence is invertible.

    Let us verify \hyperref[th:t2]{(T2)}.
    Let $\theta(a)=S_1 S_2$ with $S_1$,~$S_2 \in \cB(G_0)$.
    Decompose $\varphi(a) = d_1 + d_2$ with $d_1$,~$d_2 \in D$ mapping to $S_1$,~$S_2$ (the choice may not be unique).
    By construction $[d_1]=\sigma(S_1)=0 \in \Cl(\varphi)$, so $d_1 \in \im\varphi$ by Lemma~\ref{l:div-hom-image}.

    Let $b_1 \in H$ with $\varphi(b_1)=d_1$.
    Then $\varphi(b_1) + d_2 = \varphi(a)$.
    Because $\varphi$ is a divisor homomorphism, now $a = b_1b_2$ for $b_2 \in H$ with $\varphi(b_2)=d_2$.
    Then $\theta(b_2)=S_2$.
\end{proof}

In the case of rings of algebraic integers, such as our initial example $D$, the group $G$ is simply the ideal class group of $D$, which is a finite abelian group, and $G=G_0$ by Chebotarev's Density Theorem \cite[Cor.~2.11.6]{geroldinger-halter-koch06}.

\begin{corollary} \label{c:transfer-ring-of-integers}
    If $R$ is a ring of algebraic integers, then there is a transfer homomorphism $R^\bullet \to \cB(G)$ with $G=\Cl(R)$ the class group.
\end{corollary}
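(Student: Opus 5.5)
The plan is to apply Theorem~\ref{t:transfer} to the divisor homomorphism $\partial\colon R^\bullet \to \bN_0^{(\Max(R))}$. We already know from the proposition above that $\partial$ is a divisor homomorphism into a free abelian monoid with basis $P=\Max(R)$. Theorem~\ref{t:transfer} therefore gives a transfer homomorphism $\theta\colon R^\bullet \to \cB(G_0)$, where $G_0 = \{\,[\mathfrak p] : \mathfrak p \in \Max(R)\,\} \subseteq \Cl(\partial)$. Two things remain to be shown. First, $\Cl(\partial)$ is isomorphic to the ideal class group $\Cl(R)$. Second, $G_0$ is all of $\Cl(\partial)$.

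For the first point, I will identify $\quo(\bN_0^{(\Max(R))}) = \bZ^{(\Max(R))}$ with the group of nonzero fractional ideals of $R$. This uses unique factorization of fractional ideals into integer powers of maximal ideals, which extends the factorization of integral ideals because every nonzero ideal of a Dedekind domain is invertible. Under this identification, $\quo(\im\partial)$ consists of the quotients $(a)(b)^{-1}$ with $a,b \in R^\bullet$. These are exactly the principal fractional ideals $(ab^{-1})$, with $ab^{-1}$ ranging over $K^\times$ for $K$ the quotient field. Hence
\[
\Cl(\partial) = \quo(\bN_0^{(\Max(R))})/\quo(\im\partial)
\]
is the group of fractional ideals modulo principal ones, which is the class group $\Cl(R)$. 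Under this isomorphism the class $[\mathfrak p]$ of a prime divisor goes to the ideal class of $\mathfrak p$.

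For the second point, we need every ideal class to contain a maximal ideal. This is the main obstacle, and it is not elementary: it is the Chebotarev density statement cited just before the corollary, namely \cite[Cor.~2.11.6]{geroldinger-halter-koch06}. Concretely, each class of the (finite) class group contains infinitely many prime ideals, via the Hilbert class field and the fact that the primes splitting into a prescribed Frobenius class have positive density. I would invoke this result rather than reprove it. It gives $G_0 = \Cl(\partial) \cong \Cl(R)$.

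Finally, I will compose $\theta$ with the monoid isomorphism $\cB(\Cl(\partial)) \to \cB(\Cl(R))$ induced by the group isomorphism. The composition of a transfer homomorphism with an isomorphism of reduced monoids is again a transfer homomorphism, since (T1) and (T2) transport directly along the isomorphism. This yields the desired transfer homomorphism $R^\bullet \to \cB(G)$ with $G=\Cl(R)$.
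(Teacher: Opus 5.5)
Your proposal is correct and follows the paper's argument. The paper applies Theorem~\ref{t:transfer} to $\partial$, notes that $\Cl(\partial)$ is the ideal class group, and uses Chebotarev's Density Theorem \cite[Cor.~2.11.6]{geroldinger-halter-koch06} to get $G_0=G$; you additionally spell out the identification $\Cl(\partial)\cong\Cl(R)$ via fractional ideals and the transport of $\theta$ along the induced isomorphism of monoids of zero-sum sequences, both of which the paper leaves implicit.
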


This key result opened the flood gates for studying non-unique factorizations in rings of integers using methods from additive combinatorics, for instance, Ge\-rol\-din\-ger's Structure Theorem for Sets of Lengths \cite{geroldinger88}\cite[\S3]{geroldinger09}. 
The literature is far too vast to cover here, starting points are the standard reference \cite{geroldinger-halter-koch06} and \cite[Ch.~9.5]{tao-vu06}\cite{geroldinger09,grynkiewicz13,geroldinger16,schmid16,geroldinger-zhong20,grynkiewicz22}.
We only look at one of the earliest results and connect it to a current conjecture.

For $a \in H$, the \defit{length set} $\sL(a) \subseteq \bN_0$ consists of all factorization lengths of $a$. In the example $\sL(8)=\{2,3\}$.
The \defit{system of sets of lengths} of $H$ is $\cL(H) = \{\, \sL(a) : a \in H \,\}$.
In the example $\cL(D) = \big\{ \{0\}, \{1\}, \{2,3\}, \dots \big\}$, with $\sL(a)=0$ if $a$ is invertible.
These invariants are preserved by transfer homomorphisms.

\begin{proposition}[{\cite[Lemma 4.2]{geroldinger16}}]
    If $\theta\colon H \to D$ is a transfer homomorphism, then $\sL(a)=\sL(\theta(a))$ for all $a \in H$ and $\cL(H)=\cL(D)$.
\end{proposition}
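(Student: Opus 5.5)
I plan to prove the two inclusions $\sL(a)\subseteq\sL(\theta(a))$ and $\sL(\theta(a))\subseteq\sL(a)$ separately. Then $\cL(H)=\cL(D)$ follows at once, because $\theta$ is surjective by \hyperref[th:t1]{(T1)}. The first step is the preliminary fact quoted after the definition: $u\in H$ is invertible if and only if $\theta(u)=1$, and $a\in H$ is an atom if and only if $\theta(a)$ is an atom of $D$. I will verify this directly.

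For invertibility: if $u$ is invertible, then $\theta(u)$ is invertible in $D$, hence equals $1$ because $D$ is reduced. The converse is \hyperref[th:t1]{(T1)}.

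Now let $a$ be an atom and suppose $\theta(a)=d_1d_2$. By \hyperref[th:t2]{(T2)} we can write $a=b_1b_2$ with $\theta(b_i)=d_i$. Since $a$ is an atom, some $b_i$ is invertible, so $d_i=1$. Also $\theta(a)\neq 1$, since $a$ is not invertible. Hence $\theta(a)$ is an atom.

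Conversely, let $\theta(a)$ be an atom. Then $a$ is not invertible. If $a=bc$, then $\theta(a)=\theta(b)\theta(c)$, so one of the factors is $1$, and the corresponding element is invertible by \hyperref[th:t1]{(T1)}.

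The inclusion $\sL(a)\subseteq\sL(\theta(a))$ is now immediate. Given a factorization $a=u_1\cdots u_n$ into atoms, apply $\theta$ to get $\theta(a)=\theta(u_1)\cdots\theta(u_n)$, which is a factorization into atoms of $D$ of the same length. If $a$ is invertible, then $n=0$ and $\theta(a)=1$, so both length sets are $\{0\}$.

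The reverse inclusion is the main step: lifting a factorization $\theta(a)=v_1\cdots v_n$ into atoms of $D$. I will induct on $n$.

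If $n=0$, then $\theta(a)=1$, so $a$ is invertible and $0\in\sL(a)$.

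If $n\ge1$, apply \hyperref[th:t2]{(T2)} to the splitting $d_1=v_1$, $d_2=v_2\cdots v_n$. This gives $a=b_1b_2$ with $\theta(b_1)=v_1$ and $\theta(b_2)=v_2\cdots v_n$. By the preliminary fact, $b_1$ is an atom. By induction, $b_2$ has a factorization of length $n-1$; in the case $n=1$, $b_2$ is invertible.

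The only care needed is when $b_2$ is a unit. Then $a=b_1b_2$ is a single atom, since an associate of an atom is again an atom. In general I will absorb any unit factor into the first atom of the factorization. This gives a factorization of $a$ of length $n$, so $n\in\sL(a)$.

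I expect the main obstacle to be purely bookkeeping: the convention that factorizations are taken up to units, in a possibly non-reduced $H$. One must check that multiplying an atom by a unit yields an atom. This holds in a cancellative monoid: if $ua=bc$ with $u$ a unit, then $a=(u^{-1}b)c$, so $u^{-1}b$ or $c$ is invertible.
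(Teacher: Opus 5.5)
Your proof is correct and follows the standard argument behind the cited lemma; the paper itself gives no proof beyond remarking that atoms correspond under $\theta$. The argument runs: atoms correspond to atoms, images of factorizations are factorizations of the same length, and factorizations of $\theta(a)$ lift to $a$ by induction on the length via (T2). One small nit: in the case $n=1$ the unit $b_2$ sits to the right of the atom $b_1$, so for noncommutative $H$ (as with $R^\bullet$) you need the mirror image of your unit check, namely that $b_1b_2=bc$ gives $b_1=b\,(cb_2^{-1})$, which is equally immediate.
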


A domain $R$ is \defit{half-factorial} if every element has a factorization of unique length, that is, if each length set $\sL(a)$ is a singleton.
Carlitz showed that a ring of algebraic integers is half-factorial if and only if $\card{G} \le 2$ \cite{carlitz60,chapman19}.
Using Corollary~\ref{c:transfer-ring-of-integers} this is now an easy exercise: show that $\cB(G)$ is half-factorial if and only if $|G| \le 2$.

An inverse conjecture asks whether $\cL(R^\bullet)$ determines $\Cl(R)$, aside from a few trivial exceptions.
This conjecture has been proven in interesting special cases, such as for products of two cyclic groups, but the general case is wide open \cite{geroldinger-zhong17,geroldinger-schmid19,geroldinger-schmid23}.

As a final observation, we did not need a Dedekind domain for the machinery to work.
All that was needed was a divisor homomorphism into a free abelian monoid. This leads to Krull monoids and Krull domains \cite[Ch.~2]{geroldinger-halter-koch06}\cite{geroldinger09,geroldinger-zhong20}.
This setting not only opens up a wider class of rings to this method, but also allows us to study direct-sum decompositions of modules by the same methods \cite{facchini12,baeth-wiegand13} and \cite{facchini-herbera00,facchini02,facchini-wiegand04,baeth-geroldinger14,baeth-geroldinger-grynkiewicz-smertnig15}.

Instead of going down this road, our eyes are set in a different direction: we step into the noncommutative world.

\section{The Noncommutative Frontier: Dedekind Prime Rings} \label{sec:noncommutative}

A natural noncommutative generalization of Dedekind domains are Dedekind prime rings \cite[Ch.~5]{mcconnell-robson01}\cite{levy00,levy-robson11}.
Before defining them, let us start with two examples.

Beyond matrix rings, the \defit{Hurwitz quaternions} are a classical example:
\[
\mathcal H = \big\{ a+bi + cj + dk \in \mathbb H: a,b,c,d \in \bZ \text{ or } a, b, c, d \in \bZ + \tfrac{1}{2} \big\}.
\]
Here $i^2 = j^2 = k^2 = ijk = -1$.
The ring $\mathcal H$ is Euclidean with respect to its norm $N(a + bi + cj + dk) = a^2 + b^2 + c^2 + d^2$.
As such, it is a principal ideal domain and exhibits a kind of unique factorization \cite[Ch.~5]{conway-smith03}.\footnote{Despite uniqueness, an interesting new phenomenon, called metacommutation, arises \cite{cohn-kumar15}.
This has recently sparked significant activity \cite{forsyth-gurev-shrima16,chari20,babei-chari21,leite-machiavelo25}. The basic concept is already present in an abstract setting in work of Asano and Murata \cite{asano-murata53}.}
We would however rather focus on less-unique factorizations.

Over $\bZ[\sqrt{3}]$, we find a non-principal ideal domain.
In the ring
\[
S \coloneqq \big\{ a + bi + c \tfrac{\sqrt{3}i + j}{2} + d \tfrac{\sqrt{3}+k}{2} : a,b,c,d \in \bZ[\sqrt{3}] \big\},
\]
the element $1-2i+k$ factors as
\[
\begin{split}
1 - 2i + k &= (i+j)\cdot (-1-i-k) = (-1 - j + k) \cdot (i + j) \\
           &= \big(\tfrac{1}{2} - i + \tfrac{\sqrt{3} - 2}{2} k \big) \cdot \big(\tfrac{\sqrt{3}+2}{2} -j + \tfrac{k}{2}\big) = \dots
\end{split}
\]
If we were to compute more examples, we would start to suspect that $S$ is half-factorial.
But how to prove this?
If $S$ were a commutative Dedekind domain, as in the previous section, it would suffice to find the class group $G$ and the set $G_0$.

We will extend this to noncommutative rings by again constructing a transfer homomorphism, but the path will be more arduous.
Several definitions will crop up, but it is not necessary to fully digest them to follow the arguments.

We take a module-theoretic approach, but other routes are possible in special cases \cite{estes91,baeth-ponomarenko-et-al11,smertnig13}.
Let us first consider this in the familiar terrain of $\bZ$.
We can understand the factorization of an integer $n$ through the cyclic group $\bZ/n\bZ$: 
a prime factorization $n = p^{e_1}_1 \cdots p^{e_k}_k$ corresponds to a composition series of $\bZ/n\bZ$ with the composition factor $\bZ/p_i \bZ$ repeated $e_i$ times.
Unique factorization of integers thus follows from the Jordan-Hölder Theorem.

The same works in commutative Dedekind domains.
If $I=P_1^{e_1} \cdots P_k^{e_k}$ is the factorization of a nonzero ideal, then $R/I$ is a finite-length module and its composition factors are $R/P_i$ repeated $e_i$ times.
In this way, we can obtain the divisor theory $\partial$ from Section~\ref{sec:commutative} from the composition factors of $R/I$.

For this to work in noncommutative rings, we need a suitable replacement for Dedekind domains.
In particular, each module $R/aR$ for $a \in R^\bullet$ has to have finite length.
We could require $R$ to be a principal ideal domain, but this is too restrictive, as it excludes the example $S$.

In a principal ideal domain, every left and every right ideal is free as left, respectively right, module.
Let us replace free modules by something more general.
A (right) module $P$ is \textbf{projective} if it is a direct summand of a free right module \cite[\S2A]{lam99}, that is, if $P \oplus X \cong R^{(I)}$ for some set $I$.
A ring $R$ is \defit{hereditary} if every right and left ideal is projective.
Then submodules of projective modules are again projective \cite[\S2E]{lam99}.

Commutative hereditary domains are precisely the Dedekind domains \cite[Exm. 2.32(k)]{lam99}, so this is promising.
Unfortunately, noncommutative hereditary domains are less well-behaved.
They include rings such as the free algebra $\mathbb C\langle x, y \rangle$, which do not have the desired finite-length property for $R/aR$.
Indeed, while a commutative hereditary domain is always noetherian and integrally closed, for noncommutative domains this is no longer true.
We have to additionally impose these conditions.

\begin{definition}
    A \defit{hereditary noetherian prime ring} (\defit{HNP ring}) is a hereditary ring $R$ that is both left and right noetherian, and prime (meaning $IJ \ne 0$ for nonzero ideals $I$,~$J$ and $R$ is not the zero ring).
\end{definition}

While we added noetherianity, we weakened \emph{domain} to \emph{prime}.
This admits rings like $M_n(\bZ)$ that are not domains.
Let $R$ be a HNP ring. We now have the following.

\begin{proposition}[{\cite[Prop.~5.4.5]{mcconnell-robson01}}] \label{p:finite-length}
    The quotient $R/aR$ has finite length for $a \in R^\bullet$.
\end{proposition}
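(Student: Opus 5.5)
Since $R$ is right noetherian, $R/aR$ is a noetherian right module, so it has finite length as soon as it is also artinian. The plan is therefore to prove that $R/aR$ satisfies the descending chain condition on submodules. Equivalently, we show that every strictly descending chain of right ideals
\[
R \supseteq I_1 \supsetneq I_2 \supsetneq \cdots \supseteq aR
\]
must terminate.

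The first step is to use that $a$ is a non-zero-divisor. Then $aR \cong R_R$ via $r \mapsto ar$, and $aR$ is an essential right ideal. To see essentiality, I would invoke Goldie's theorem: $R$ is prime noetherian, so it has a simple artinian classical quotient ring $Q$, and $a$ is invertible in $Q$. Hence $aQ = Q$, so $aR$ meets every nonzero right ideal nontrivially. The point of this step is to reduce the claim to a statement about essential right ideals.

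The main step is to show that $R/E$ has finite length for every essential right ideal $E$ of an HNP ring. For this I would use the hereditary hypothesis through uniform dimension and projectivity.

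1. \emph{Setup.} Let $E \subseteq I \subseteq R$ be right ideals. Every submodule of $R/E$ has the form $I/E$, and each such $I$ is again essential and projective.
2. \emph{Trying to bound the length.} Consider first the standard argument for Dedekind prime rings: invertible ideals, or Krull dimension $1$ of $R$, give that $R/E$ is artinian. In general, I would appeal to the known fact that a noetherian hereditary prime ring which is not artinian has right Krull dimension $1$. Then every proper factor $R/E$ with $E$ essential has Krull dimension $0$, i.e.\ is artinian.
3. \emph{Proving Krull dimension $\le 1$.} To prove this fact, take a descending chain of essential right ideals. Using the Eakin–Nagata / Goldie-style argument, each $I$ is a projective module of the same uniform dimension as $R$. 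Then apply Lenagas's lemma that a non-artinian hereditary noetherian module's factors by essential submodules are artinian (Webber's theorem).

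The main obstacle is exactly this Krull-dimension step. My first attempt would be to cite it from the literature, namely McConnell–Robson Ch.~5 on HNP rings (Krull dimension $\le 1$, hence $R/E$ artinian for essential $E$). I do not expect to reprove it from scratch here.

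With that fact in hand, the argument finishes as follows. The essential right ideal $E = aR$ makes $R/aR$ artinian, and being noetherian as well, it has finite length.
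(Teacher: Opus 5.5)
Your preliminary reductions are correct: $R/aR$ is noetherian, $aR$ is essential, and the entire content of the statement is the descending chain condition on right ideals between $aR$ and $R$. The paper gives no argument of its own, only the citation \cite[Prop.~5.4.5]{mcconnell-robson01}, so citing McConnell--Robson for this fact puts you on the same footing as the paper.

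Step~3, however, is not a proof sketch, and there is a genuine gap if you present it as one. For a prime noetherian ring, ``right Krull dimension $\le 1$'' is essentially equivalent to ``$R/E$ is artinian for every essential right ideal $E$'', so invoking it is circular. The lemma you quote at the end of Step~3 is literally the statement to be proved. The Eakin--Nagata theorem, which concerns finite ring extensions, plays no role here. Your argument also never uses that $R$ is \emph{left} noetherian, and that is exactly where the descending chain condition comes from in a direct proof.

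The missing idea is a duality that turns DCC on right ideals into ACC on left modules.
\begin{itemize}
\item For a right ideal $aR \subseteq I \subseteq R$, set $I^* \coloneqq \{\, q \in \quo(R) : qI \subseteq R \,\}$. This is a left $R$-module with $R \subseteq I^* \subseteq Ra^{-1}$.
\item Every $f \in \mathrm{Hom}_R(I,R)$ is left multiplication by $f(a)a^{-1} \in I^*$. To see this, write $a^{-1}y = rs^{-1}$ and compare $f(y)s = f(ys) = f(ar) = f(a)r$.
\item Since $R$ is hereditary, $I$ is projective. The dual basis lemma, evaluated at $a$, gives $x_i \in I$ and $q_i \in I^*$ with $\sum_i x_i q_i = 1$.
\item Hence $x = \sum_i x_i(q_i x)$ shows $I = \{\, x \in \quo(R) : I^*x \subseteq R \,\}$. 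So $I \mapsto I^*$ is injective and inclusion-reversing.
\item A strictly descending chain of right ideals between $R$ and $aR$ would therefore give a strictly ascending chain of left submodules of $Ra^{-1}$. But $Ra^{-1} \cong R$ as left modules via $x \mapsto xa$, so this contradicts left noetherianity.
\end{itemize}
Combined with the ascending chain condition, this shows $R/aR$ has finite length.
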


To proceed with the construction of a transfer homomorphism, we add an extra condition that replaces integral closedness.
While the results hold without it \cite{smertnig19}, the arguments are much more technical.
For this, we need a ring of fractions $\quo(R)$ of $R$.
Forming rings of fractions of noncommutative rings is a subtle affair \cite{cohn71}\cite[\S9]{lam99}, but in our setting Goldie's Theorem \cite[Ch.~2\,\&\,3]{mcconnell-robson01}\cite[\S11]{lam99} shows that there exists a ring of fractions $\quo(R) = \{\, ab^{-1} : a \in R,\ b \in R^\bullet \,\}$ and $\quo(R)$ is simply a matrix ring over a division ring.
Now we arrive at our replacement for Dedekind domains.

\begin{definition}
    A \defit{Dedekind prime ring} is a \textbf{hereditary noetherian prime} (\textbf{HNP}) ring such that $\{\, x \in \quo(R) : xI \subseteq I \,\} = R$ for every nonzero ideal $I$ of $R$.
\end{definition}

HNP rings and Dedekind prime rings may seem arcane at first, but are quite natural: noetherian prime rings are the noncommutative analogues of commutative noetherian domains.
Being hereditary just means that the global dimension, a natural homological dimension, is at most one \cite[Prop.~5.14]{lam99}.
So we are simply looking at the smallest interesting class of noncommutative noetherian rings.

The examples $\mathcal H$ and $S$ are Dedekind prime rings.
They fall into a more general class of maximal orders in central simple algebras over number fields.
These rings are noncommutative analogues of rings of algebraic integers and appear in the integral representation theory of groups \cite{reiner75,curtis-reiner81,curtis-reiner87}.
Other examples are certain endomorphism rings of modules \cite[Prop.~5.3.15]{mcconnell-robson01}.

Now let $R$ be a Dedekind prime ring, with $R \ne \quo(R)$ to avoid trivialities.
Following the example of the integers, we can now define a replacement for the divisor theory $\partial$ from Section~\ref{sec:commutative}.
Let $\mathcal S(R)$ be the set of isomorphism classes of simple right $R$-modules.
We view the composition factors of a finite-length module $M$ as an element $(M)$ of the free abelian monoid $\bN_0^{(\mathcal S(R))}$.
For instance, if $M$ has composition factors $V$, $W$, $V$, then $(M) = 2(V) + (W)$.
Then $(R/I)$ for a right ideal $I$, with $I \cap R^\bullet \ne \emptyset$, plays the role of the divisor $\partial(I)$.

In our example, we have $(S/(1-2i+k)S)=2(V) + 2(W)$ for non-isomorphic simple modules $V$ and $W$,
and
\[
\begin{split}
&(S/(i+j)S)=2(V), \quad (S/(-1 - i - k)S)=(S/(-1 - j + k)S)=2(W),\\
&(S/(\tfrac{1}{2} - i+ \tfrac{\sqrt{3} - 2}{2} k)S)= (S/(\tfrac{\sqrt{3}+2}{2} -j + \tfrac{k}{2})S) = (V)+(W).
\end{split}
\]

Crucial in the commutative case was that we can recognize principal ideals from their divisors.
We need the same here: if $I=aR$ for some $a \in R^\bullet$, we need to be able to tell this purely from $(R/I)$.
Here $I = aR$ for some $a \in R^\bullet$ if and only if $I$ is free, by \cite[Prop.~3.1.15(iv)]{mcconnell-robson01}.
So we need to understand whether $I$ is free from $(R/I)$.

Let us first consider something less ambitious: what can we recover from $R/I \cong R/bR$ with $b \in R^\bullet$?
Schanuel's Lemma \cite[Lemma~5.1]{lam99} proves useful.
It allows us to cross-multiply isomorphic quotients, reminiscent of how we calculate with fractions.

\begin{lemma} \label{l:schanuel}
    Let $M$,~$M'$, $N$, $N'$ be modules with $M$, $N$ projective. If
    \[
    M/M' \cong N/N',\quad\text{then}\quad M \oplus N' \cong N \oplus M'.
    \]
\end{lemma}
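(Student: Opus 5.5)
The plan is the standard pullback argument. Fix an isomorphism $M/M' \cong N/N'$. Composing the quotient maps, we get two epimorphisms onto a common module $X$:
\[
\alpha\colon M \to X, \qquad \beta\colon N \to X,
\]
with $\ker\alpha = M'$ and $\ker\beta = N'$.

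The key step is to form the pullback
\[
P \coloneqq \{\, (m,n) \in M \oplus N : \alpha(m) = \beta(n) \,\}
\]
and compute it in two ways via its coordinate projections.

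First, consider the projection $P \to M$, $(m,n) \mapsto m$. It is surjective: given $m$, surjectivity of $\beta$ supplies $n$ with $\beta(n) = \alpha(m)$. Its kernel is $\{\,(0,n) : \beta(n) = 0\,\} \cong N'$. This gives a short exact sequence
\[
0 \to N' \to P \to M \to 0.
\]
Since $M$ is projective, the sequence splits, so $P \cong M \oplus N'$.

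Second, the same argument applied to the projection $P \to N$ yields
\[
0 \to M' \to P \to N \to 0,
\]
and projectivity of $N$ gives $P \cong N \oplus M'$. Combining the two descriptions of $P$ gives $M \oplus N' \cong N \oplus M'$.

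There is no real obstacle here. The only points to verify are the surjectivity of each projection, which uses that both $\alpha$ and $\beta$ are onto, and the splitting of each sequence. The splitting follows from the defining lifting property of projectives, equivalently from their characterization as direct summands of free modules. Both hypotheses, that $M$ and $N$ are projective, are used, one for each splitting.
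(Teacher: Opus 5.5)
Your proof is correct and is essentially the paper's argument. The paper simply cites Schanuel's Lemma \cite[Lemma~5.1]{lam99}, applied to $0 \to M' \to M \to M/M' \to 0$ and $0 \to N' \to N \to N/N' \to 0$, and your pullback construction is the standard proof of that lemma, so you have unpacked the cited result rather than taken a different route.
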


\begin{proof}
    Applying Schanuel's Lemma to the short exact sequences
    \[
    \begin{tikzcd}
    & 0 \arrow{r} & M' \arrow{r}  & M \arrow{r} & M/M' \arrow{r} \arrow{d}{\rotatebox{90}{\(\sim\)}} & 0\phantom{.} \\
    & 0 \arrow{r} & N' \arrow{r} & N \arrow{r} & N/N' \arrow{r} & 0
    \end{tikzcd} 
    \]
    shows the claim.
\end{proof}

From $R/I \cong R/bR$ we obtain $R \oplus I \cong R \oplus bR$.
This is not enough to conclude $I \cong bR$.
For now, we sidestep this with a new definition, weaker than isomorphism.

\begin{definition}
    Two finitely generated projective modules $M$,~$N$ are \defit{stably isomorphic} if there exists some $k \ge 0$ such that $M \oplus R^k \cong N \oplus R^k$. Equivalently, there exists a finitely generated projective module $X$ such that $M \oplus X \cong N \oplus X$.
\end{definition}

Now we can improve the previous lemma to the case where only $(R/I)=(R/bR)$.

\begin{proposition} \label{p:comp-series-iso}
    Let $M$,~$M'$, $N$, $N'$ be finitely generated projective modules such that $M/M'$ and $N/N'$ are finite-length modules with $(M/M') = (N/N')$.
    Then $M \oplus N'$ and $N \oplus M'$ are stably isomorphic.
\end{proposition}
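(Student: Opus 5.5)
We would prove the claim by induction on the common length $n$ of $M/M'$ and $N/N'$, reducing to the case where the two quotients are actually isomorphic, where Lemma~\ref{l:schanuel} applies. Throughout we use that $R$ is hereditary, so every submodule of a finitely generated projective module is projective. Since $R$ is noetherian, such submodules are also finitely generated. Hence all modules produced below stay in the class of finitely generated projective modules.

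If $n=0$, then $M=M'$ and $N=N'$, so $M\oplus N' = M \oplus N \cong N\oplus M'$. For $n \ge 1$, choose a maximal submodule $M_1$ with $M' \subseteq M_1 \subsetneq M$, so $M/M_1$ is simple. Since $(M/M')=(N/N')$, this simple module $V \cong M/M_1$ also occurs as a composition factor of $N/N'$. The Jordan--Hölder Theorem does not let us choose where $V$ sits in a composition series of $N/N'$, since it need not be a top factor. So instead we pick a chain $N' \subseteq N_2 \subsetneq N_1 \subseteq N$ with $N_1/N_2 \cong V$ somewhere in the middle.

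The proof then proceeds by chaining three steps.
\begin{enumerate}
\item Apply Lemma~\ref{l:schanuel} to $M/M_1 \cong N_1/N_2$. This gives $M\oplus N_2 \cong N_1 \oplus M_1$.
\item Compare $M_1/M'$ with $N/N'$ with the factor $V$ removed. Concretely, use the pairs $(M_1, M')$ and $(N_2\oplus N, N'\oplus N_1)$. The quotient of the second pair is $N_2/N' \oplus N/N_1$, whose composition factors are those of $N/N'$ minus one copy of $V$. These therefore agree with those of $M_1/M'$, and the length is $n-1$. All four modules are finitely generated projective. By induction, $M_1 \oplus N'\oplus N_1$ is stably isomorphic to $N_2\oplus N \oplus M'$.
\item Add $M$ to both sides of the relation from step~2 and substitute step~1:
\[
M\oplus M_1\oplus N'\oplus N_1 \;\sim\; M \oplus N_2 \oplus N\oplus M' \;\cong\; N_1\oplus M_1\oplus N \oplus M' .
\]
Here $\sim$ denotes stable isomorphism.
\end{enumerate}

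The chain in step~3 shows that $(M\oplus N')\oplus X$ is stably isomorphic to $(N\oplus M')\oplus X$ with $X = M_1\oplus N_1$ finitely generated projective. By the second form of the definition of stable isomorphism, this yields the claim. The statement is set up so that induction is available: the hypothesis allows arbitrary pairs of finitely generated projectives rather than only cyclic quotients of $R$.

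The main obstacle is this bookkeeping. The matching simple factor cannot be placed at the top of $N/N'$, which is why we generalize to direct sums of pairs. We must also check that cancelling the auxiliary summand $X$ is legitimate. Stable isomorphism tolerates adding any finitely generated projective module, so it suffices to note that $X$ is one, and no genuine cancellation is required.
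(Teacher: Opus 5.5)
Your proof is correct, but it is organized differently from the paper's.

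\textbf{The paper's route.} The paper does not induct. It fixes composition series $M=M_0\supsetneq\cdots\supsetneq M_k=M'$ and $N=N_0\supsetneq\cdots\supsetneq N_k=N'$. It uses the Jordan--H\"older permutation $\sigma$ to match \emph{all} factors at once and applies Lemma~\ref{l:schanuel} to each matched pair, giving $M_{i-1}\oplus N_{\sigma(i)}\cong N_{\sigma(i)-1}\oplus M_i$. Summing these $k$ isomorphisms, the intermediate terms telescope. What remains is $M\oplus N'\oplus X\cong N\oplus M'\oplus X$ with the explicit $X=\bigoplus_{i=1}^{k-1}(M_i\oplus N_i)$.

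In that argument, the difficulty you flag as the main obstacle never arises. This is the fact that the factor matching $M/M_1$ may sit in the middle of $N/N'$. Schanuel's lemma is applied to each pair of isomorphic subquotients wherever they sit, so no factor ever needs to be moved to the top.

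\textbf{Your route.} You handle the same issue with the re-pairing $(N_2\oplus N,\,N'\oplus N_1)$. In effect, your induction is the paper's telescoping sum performed one matched pair at a time. The re-pairing keeps the remaining bookkeeping in the shape of the statement so the inductive hypothesis applies. Each step checks out:
\begin{itemize}
\item the inclusion $N'\oplus N_1\subseteq N_2\oplus N$ holds componentwise;
\item the quotient $N_2/N'\oplus N/N_1$ has exactly the composition factors of $N/N'$ minus one copy of $V$;
\item stable isomorphism is preserved under adding a finitely generated projective summand and is transitive;
\item so you legitimately obtain the claim with auxiliary summand $M_1\oplus N_1$ plus whatever the induction supplies.
\end{itemize}

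\textbf{Comparison.} Your version has the modest advantage of only ever needing one matched factor at a time, so it uses Jordan--H\"older only through the additivity and well-definedness of $(\cdot)$. The costs are that the modules grow with each inductive step and the final auxiliary summand is less explicit. The paper's one-shot sum is the more economical proof of the same fact.
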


\begin{proof}
    Let $M=M_0 \supsetneq M_1 \supsetneq \cdots \supsetneq M_k = M'$ and $N=N_0 \supsetneq N_1 \supsetneq \cdots \supsetneq N_k = N'$ be composition series.
    By the Jordan-Hölder Theorem, there exists a permutation $\sigma$ such that $M_{i-1}/M_i \cong N_{\sigma(i)-1}/N_{\sigma(i)}$ for all $i$.
    Because $R$ is hereditary, all modules $M_i$ and $N_i$ are finitely generated projective.
    Lemma~\ref{l:schanuel} shows
    \[
    M_{i-1} \oplus N_{\sigma(i)} \cong N_{\sigma(i)-1} \oplus M_i \qquad\text{for all $i$.}
    \]
    Taking the direct sum over $1 \le i \le k$, we obtain
    \[
    M_0 \oplus X \oplus N_k \cong N_0 \oplus X \oplus M_k,
    \]
    with $X \coloneqq \bigoplus_{i=1}^{k-1} N_i \oplus M_i$ finitely generated projective.
\end{proof}

While isomorphic modules are stably isomorphic, the converse can fail.
Luckily, in Dedekind prime rings, stable isomorphism nearly coincides with isomorphism, as a consequence of the deep Cancellation Theorem of Stafford \cite[Thm.~11.7.13]{mcconnell-robson01}.%
\footnote{This theorem even allows us to cancel $X$ in the proof of Proposition~\ref{p:comp-series-iso}, and to conclude $M \oplus N' \cong N \oplus M'$ \cite[Thm.~34.6]{levy-robson11}, just as in Lemma~\ref{l:schanuel}.}

\begin{theorem}[{\cite[Cor.~11.7.14]{mcconnell-robson01}}] \label{t:cancellation}
    Two finitely generated projective modules $M$, $N$ are stably isomorphic if and only if $M \oplus R \cong N \oplus R$.
\end{theorem}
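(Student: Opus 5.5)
The direction ``$M \oplus R \cong N \oplus R$ implies stably isomorphic'' is immediate from the definition, taking $k=1$. The content is the converse. Here the plan is to reduce to Stafford's Cancellation Theorem, which we take in the form: if $P$, $Q$ are finitely generated projective modules over a Dedekind prime ring (more generally, a noetherian ring of Krull dimension one, or of stable rank at most $2$), and $P \oplus R^k \cong Q \oplus R^k$, then one copy of $R$ can be cancelled as long as at least one copy of $R$ remains.

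Concretely, suppose $M \oplus R^k \cong N \oplus R^k$ with $k \ge 2$; the case $k\le 1$ is trivial, since for $k=0$ we may add $R$ to both sides. Set $P \coloneqq M \oplus R^{k-1}$ and $Q \coloneqq N \oplus R^{k-1}$, so that $P \oplus R \cong Q \oplus R$. The key step is to show that $P \cong Q$ whenever $P$ already contains a free summand $R$ and $R$ is a Dedekind prime ring that is not artinian. I would derive this from the stable-range/Serre-type statement underlying Stafford's theorem: every finitely generated projective module over such $R$ is isomorphic to $I \oplus R^{m}$ for a right ideal $I$, and $R$ has stable range $2$. Stable range $2$ lets one cancel $R$ from $X \oplus R^2 \oplus R \cong Y \oplus R$, and more generally from modules having rank at least $2$. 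Induction on $k$ then reduces $k$ to $1$, giving $M \oplus R \cong N \oplus R$.

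The main obstacle is the cancellation step itself. Over a Dedekind prime ring, the elementary route via Bass's stable range theorem gives cancellation only above a rank bound that depends on the Krull dimension. Getting down to exactly one copy of $R$ needs Stafford's finer result, which uses the fact that the ring has Krull dimension $1$ and is hereditary noetherian prime, so that every non-artinian such ring has ``stable rank $\le 2$'' for projectives. I would not attempt to reprove this. Instead I would cite it from \cite[Thm.~11.7.13]{mcconnell-robson01} in the form stated above, so that the corollary follows by the induction just described.
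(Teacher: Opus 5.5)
Your route is the paper's. The paper gives no argument of its own; it derives the statement from Stafford's Cancellation Theorem \cite[Thm.~11.7.13]{mcconnell-robson01}, and the easy direction is as you say. The problem is the key step as you formulate it: that over a non-artinian Dedekind prime ring, $P\oplus R\cong Q\oplus R$ implies $P\cong Q$ whenever $P$ has a free summand $R$. This is false.

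Take $P=R$. Your claim then says $Q\oplus R\cong R\oplus R$ forces $Q\cong R$, i.e.\ every non-artinian Dedekind prime ring is Hermite. The paper warns against exactly this right after the statement (``the last copy need not cancel''), and certain maximal orders in definite quaternion algebras are counterexamples. So Stafford's theorem cannot be cited in that form. It cancels only under a size hypothesis on the module being kept, and $P\cong R$ violates it when $R$ is a domain. Stable range $2$ does not help here either: Bass cancellation removes a copy of $R$ only while at least two copies remain.

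The repair is a case distinction your write-up omits. Put $P=M\oplus R^{k-1}$ and $Q=N\oplus R^{k-1}$.
\begin{itemize}
\item The bad case $P\cong R$ occurs only for $M=0$, $k=2$. Then $R^2\cong N\oplus R^2$ gives $u(N)=0$ by additivity of uniform dimension, so $N=0$ and the conclusion is trivial.
\item If $M\neq 0$, then $M\oplus R$ has uniform dimension $u(M)+u(R)\ge 2$, which is the situation Stafford's theorem covers. You can then cancel $R^{k-1}$ from $(M\oplus R)\oplus R^{k-1}\cong (N\oplus R)\oplus R^{k-1}$ in one step, with no induction.
\end{itemize}
This also explains why the statement keeps one copy of $R$: for $M\neq 0$, it makes $M\oplus R$ satisfy the size condition even when $R$ itself does not.
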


Thus, we can cancel all but one copy of $R$ from a stable isomorphism.
Unfortunately, the last copy need not cancel.
This leads to an additional condition, which always holds if $R$ is commutative (by \cite[Prop.~11.1.16]{mcconnell-robson01} or the Structure Theorem for finitely generated modules over Dedekind domains \cite[\S C-5.3]{rotman17}).

\begin{definition}
    The ring $R$ is \defit{Hermite} if $M \oplus R \cong R \oplus R$ implies $M \cong R$ for all finitely generated projective modules $M$.
\end{definition}

With the Hermite condition, we can finally recognize free right ideals $I$ from $(R/I)$, as now $(R/I) = (R/bR)$ implies $I \oplus R \cong bR \oplus R$ and hence $I \cong R$.
To proceed, we need one more observation: every ``divisor'' actually comes from a right ideal.

\begin{lemma}
    Every $(M) \in \bN_0^{(\mathcal S(R))}$ occurs as $(M)=(R/I)$ for some right ideal $I$.
\end{lemma}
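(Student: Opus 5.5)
Since the composition-factor map is additive along chains of submodules, it is enough to realize each single simple class. We then iterate downward. Write $(M) = (V_1) + \cdots + (V_k)$ with simple modules $V_i$. We will build a descending chain of right ideals
\[
R = I_0 \supsetneq I_1 \supsetneq \cdots \supsetneq I_k = I
\]
with $I_{j-1}/I_j \cong V_j$. Then $(R/I) = \sum_j (I_{j-1}/I_j) = (M)$, because composition factors are additive along a filtration. We also want $I \cap R^\bullet \neq \emptyset$, so that $I$ is of the kind considered above. This holds automatically once $R/I$ has finite length: $I$ is then an essential right ideal, and in a prime Goldie ring every essential right ideal contains a regular element.

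The key step is to show that every simple module $V$ occurs as a quotient of every nonzero right ideal $J$ of $R$ with $R/J$ of finite length. That is, we need a surjection $J \to V$; its kernel $J'$ then satisfies $J/J' \cong V$, and $R/J'$ again has finite length.

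First, $J$ contains a regular element $b$, so $bR \subseteq J$ and $bR \cong R$. Since $V$ is cyclic, there is a surjection $R \to V$. Because $R/J$ has finite length, $J$ and $R$ have the same rank over $\quo(R)$. Both are projective and $J$ is essential in $R$, so $J \oplus R^m \cong R^{m+1}$ up to stable isomorphism for suitable $m$. Rather than pursue this route, however, we use a more direct argument.

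$V$ is a simple module, so $V \cong R/\mathfrak m$ for a maximal right ideal $\mathfrak m$. Since $R \ne \quo(R)$ and $R$ is prime noetherian, $V$ is not faithful over the artinian ring $\quo(R)$; more concretely, $\operatorname{ann}(V)$ is a nonzero ideal. Hence $V\cdot \operatorname{ann}(V) = 0$, and $V$ is a module over $R/\operatorname{ann}(V)$.

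To get a surjection $J \to V$, it suffices that $V \cdot J$, viewed via $\operatorname{Hom}$, is nonzero. Concretely, we need $J \not\subseteq \mathfrak m'$ for some maximal right ideal $\mathfrak m'$ with $R/\mathfrak m' \cong V$; then $J/(J\cap \mathfrak m') \cong (J+\mathfrak m')/\mathfrak m' = R/\mathfrak m' \cong V$. Suppose instead that $J$ lies in every such $\mathfrak m'$. Then $J$ lies in their intersection, which is a two-sided ideal contained in $\operatorname{ann}(V)$. So we may replace $J$ by $bR$, with $b$ regular in $J$, and derive a contradiction from $bR \subseteq \operatorname{ann}(V)$.

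Here we use the isomorphism $R \to bR$, $r \mapsto br$. The module $bR/b\mathfrak m \cong R/\mathfrak m \cong V$ is a simple quotient of $bR$. So $V$ is a quotient of $bR$, and hence of $J$ via $bR \subseteq J$? Not quite: a quotient of a submodule need not be a quotient of the larger module. This is exactly the main obstacle.

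To handle it, we use the decomposition $J \oplus X \cong R^n$ and the Cancellation Theorem~\ref{t:cancellation}. Alternatively, we note that since $R/J$ has finite length, $\operatorname{Hom}(J,V)$ and $\operatorname{Hom}(bR,V)$ are related through the exact sequence
\[
0 \to \operatorname{Hom}(J/bR, V) \to \operatorname{Hom}(J,V) \to \operatorname{Hom}(bR,V) \to \operatorname{Ext}^1(J/bR,V).
\]
We would instead first pick $J$-quotients by induction, along the following lines. Choose the first factor in the chain to be any $V_1$ with $I_1 = \mathfrak m_1$. At later steps, if a direct surjection $I_{j-1} \to V_j$ fails, we use the fact that $I_{j-1}$ is a projective module of rank one. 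By Proposition~\ref{p:comp-series-iso} and Theorem~\ref{t:cancellation}, $I_{j-1} \oplus R \cong R \oplus R$ up to twisting by a projective. Hence $I_{j-1}$ is a generator: in a Dedekind prime ring, every nonzero ideal is invertible and every nonzero right ideal is a generator. A generator maps onto every simple module, and this gives the surjection.

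So the main step to verify is that nonzero right ideals of a Dedekind prime ring are generators. We would prove it via $J \cdot J^{*} = $ trace ideal, which is a nonzero two-sided ideal and therefore invertible; the Dedekind condition then forces it to equal $R$.
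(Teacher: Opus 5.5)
Your eventual argument, that every nonzero right ideal of a Dedekind prime ring is a generator and hence surjects onto each simple $V_j$, so that $R=I_0\supsetneq I_1\supsetneq\cdots\supsetneq I_k$ can be built step by step, is the paper's proof. Your trace-ideal sketch of the generator property can also be completed. With $J^*=\{q\in\quo(R): qJ\subseteq R\}$, the trace ideal is $T=J^*J$, not $JJ^*$. The dual basis lemma gives $JT=J$, hence $T^2=T$, and invertibility forces $T=R$.

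The gap is at the one point where the standing hypothesis $R\neq\quo(R)$ is indispensable. To apply the generator property again at step $j+1$ you need $I_j\neq 0$. Your only support for this is the unproved assertion that finite length of $R/I_j$ makes $I_j$ essential. Without the hypothesis that assertion is false, and so is the lemma. In $M_2(K)$, $K$ a field, the right ideal $I=e_{11}M_2(K)$ has $M_2(K)/I$ simple but is not essential. Moreover, $3(V)$ is not of the form $(R/I)$ there, because $R$ itself has length $2$. The missing idea is that for $R\neq\quo(R)$ the ring has no minimal right ideals. Hence the kernel $I_j$ of $I_{j-1}\to V_j$ cannot be $0$, since otherwise $I_{j-1}\cong V_j$ would be a minimal right ideal. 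This is exactly the ingredient the paper adds.

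You should also drop the detours, since they contain false claims.
\begin{itemize}
\item The one place you invoke $R\neq\quo(R)$ is to conclude $\operatorname{ann}(V)\neq 0$. This fails for the Weyl algebra $A_1$, a simple Dedekind prime ring in which every simple module is faithful.
\item In bounded rings, $J\subseteq\operatorname{ann}(V)$ genuinely happens (take $J=p\bZ$, $V=\bZ/p\bZ$ over $\bZ$). So no contradiction can be extracted, even though $J$ still surjects onto $V$ via a map that does not extend to $R$.
\item $I_{j-1}$ need not be stably free: a non-principal ideal of a commutative Dedekind domain is not, and this is exactly what the class group measures. So ``Hence $I_{j-1}$ is a generator'' does not follow from Proposition~\ref{p:comp-series-iso} and Theorem~\ref{t:cancellation}. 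The generator property must come from the trace-ideal argument alone.
\end{itemize}
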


\begin{proof}
    Every nonzero right ideal $J$ of a Dedekind prime ring is a generator module \cite[Thm.~5.2.10]{mcconnell-robson01} (see \cite[\S18]{lam99} for generators).
    This means, for every simple module $V$, there exists $I \subseteq J$ with $J/I \cong V$.
    Since we excluded the trivial case $R = \quo(R)$, the ring $R$ has no minimal right ideals \cite[Lemma~3.3.4]{mcconnell-robson01}.
    Thus, also $I \ne 0$, and we can finish by induction on the length of $(M)$.
\end{proof}

Putting everything together, we have a replacement for the principal divisors.

\begin{theorem}[{\cite[Prop.~3.17]{smertnig19}}] \label{t:nc-div-hom}
    Let $R$ be a Hermite Dedekind prime ring and let
    \[
    H \coloneqq \big\{\, (R/aR) : a \in R^\bullet \,\big\} \subseteq \bN_0^{(\mathcal S(R))}.
    \]
    Then $H$ is a monoid and the inclusion $H \hookrightarrow \bN_0^{(\mathcal S(R))}$ is a divisor homomorphism.
\end{theorem}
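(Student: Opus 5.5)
The plan has two parts. First, $H$ is a submonoid of $\bN_0^{(\mathcal S(R))}$. Second, whenever $h \le h'$ with $h$,~$h' \in H$, the difference $h'-h$ again lies in $H$. For a submonoid of a free abelian monoid with the inclusion map, this second property is exactly the divisor homomorphism condition: divisibility in $\bN_0^{(\mathcal S(R))}$ is the componentwise order, and divisibility in $H$ means the difference lies in $H$.

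\emph{Monoid.} Clearly $0 = (R/1R) \in H$. For $a$,~$b \in R^\bullet$, the chain $abR \subseteq aR \subseteq R$ gives the short exact sequence $0 \to aR/abR \to R/abR \to R/aR \to 0$. Left multiplication by $a$ is injective, since $a$ is a non-zero-divisor, so it induces an isomorphism $R/bR \cong aR/abR$. Composition factors are additive on short exact sequences, hence $(R/abR) = (R/aR) + (R/bR)$. So $H$ is closed under addition, and $a \mapsto (R/aR)$ is a monoid homomorphism $R^\bullet \to \bN_0^{(\mathcal S(R))}$.

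\emph{Divisor property.} Suppose $(R/aR) \le (R/bR)$ and set $d \coloneqq (R/bR) - (R/aR) \in \bN_0^{(\mathcal S(R))}$. I want to realize $d$ inside $aR$. The argument of the preceding lemma works for any nonzero right ideal, since it only uses that $J$ is a generator and that $R$ has no minimal right ideals. Applying it to $J = aR$ and inducting on the length of $d$, I get a right ideal $I \subseteq aR$ with $(aR/I) = d$. Since $aR \cong R$, this is the same as finding $I' \subseteq R$ with $(R/I') = d$ and setting $I = aI'$. Then additivity gives
\[
(R/I) = (R/aR) + (aR/I) = (R/bR).
\]
Now apply Proposition~\ref{p:comp-series-iso} with $M = N = R$, $M' = I$, $N' = bR$; here $I$ is finitely generated projective because $R$ is hereditary and noetherian. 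It shows that $R \oplus bR$ and $R \oplus I$ are stably isomorphic. Since $bR \cong R$, Theorem~\ref{t:cancellation} gives $I \oplus R \cong R \oplus R$, and the Hermite condition gives $I \cong R$. Hence $I = cR$ for some $c$, and because $R/I$ has finite length, $c \in R^\bullet$ by the cited characterization of free right ideals.

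Since $cR \subseteq aR$, write $c = ae$. Left cancellation of $a$ in $\quo(R)$ shows $e \in R^\bullet$; for instance, $e = a^{-1}c$ is a unit in $\quo(R)$ lying in $R$. By the monoid computation, $(R/cR) = (R/aR) + (R/eR)$, so $(R/eR) = d$ and therefore $d \in H$.

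I expect the main obstacle to be this divisor step. The delicate points are placing the realization of $d$ inside $aR$ (needed so that the resulting element is a left multiple of $a$) and checking that $c$ and $e$ are genuine non-zero-divisors, so that the finite-length and freeness facts apply. The cancellation and Hermite parts are just the earlier theorems applied directly.
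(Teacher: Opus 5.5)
Your proof is correct and rests on the same ingredients as the paper's: realize the difference divisor $d$ by a right ideal, then use Proposition~\ref{p:comp-series-iso}, Theorem~\ref{t:cancellation} and the Hermite condition to show that this right ideal is principal. The only variation is where you realize $d$. You place it inside $aR$ and compare $R/I$ with $R/bR$, which needs the extra factorization $c = ae$ at the end. The paper instead realizes $d = (R/I)$ with $I \subseteq R$ arbitrary and compares $(R\oplus R)/(I\oplus aR)$ with $R/bR$, so $d = (R/cR)$ comes out directly.
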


\begin{proof}
    We see that $H$ is a monoid because $(R/abR)=(R/aR)+(aR/abR)=(R/aR)+(R/bR)$, using $R/bR \cong aR/abR$ via $x+bR \mapsto ax + abR$.

    Now it suffices to show that $(R/I) + (R/aR) = (R/bR)$ with $a$,~$b \in R^\bullet$ implies that $I$ is free, that is, isomorphic to $R$.
    Because $R$ is Hermite, it suffices to show that $I$ is stably isomorphic to $R$.
    For this, note $\big((R \oplus R)/(I \oplus aR)\big) = (R/I) + (R/aR) = (R/bR)$, so Proposition~\ref{p:comp-series-iso} implies $R \oplus R \oplus bR \cong I \oplus aR \oplus R$.
\end{proof}

Theorem~\ref{t:transfer} now gives a transfer homomorphism for $H$!
One more step is needed to get to $R^\bullet$ instead of $H$.
The Jordan-Hölder Theorem guarantees uniqueness of the composition factors up to permutation.
It does not say which permutations of the factors can actually be realized.

Fixing this requires a final condition, tying $R$ closer to the commutative world.
The ring $R$ is \defit{bounded} if every one-sided ideal of the form $aR$ or $Ra$ with $a \in R^\bullet$ contains a nonzero two-sided ideal.
The rings $\mathcal H$ and $S$ above are bounded.
By contrast, the first Weyl algebra $A_1 \coloneqq \mathbb C\langle x, y \,|\, yx - xy = 1 \rangle$ is a Dedekind prime ring but not bounded.

\begin{proposition} \label{p:permute-composition-factors}
    If $R$ is a bounded Dedekind prime ring and $M$ is a finite-length module, then every permutation of composition factors of $M$ can be realized in a composition series of $M$.
\end{proposition}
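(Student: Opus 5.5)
We reduce to swapping two adjacent composition factors and then handle one length-two module.

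\textbf{Reduction.} Every permutation is a product of adjacent transpositions. So it suffices to show the following: given a composition series
\[
M=M_0 \supsetneq \cdots \supsetneq M_k=0
\]
and an index $i$, there is another composition series that agrees with it except at $M_i$, and whose factors in positions $i$ and $i+1$ are interchanged. Everything outside $M_{i-1}$ and $M_{i+1}$ stays fixed. Therefore the problem reduces to a single module $L=M_{i-1}/M_{i+1}$ of length two, with a given submodule $U\cong V$ and $L/U\cong W$. We must find a submodule $U'\subseteq L$ with $U'\cong W$ and $L/U'\cong V$. If $V\cong W$ there is nothing to do, so assume $V\not\cong W$.

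\textbf{The length-two case.} If $L\cong V\oplus W$, take $U'$ to be the summand isomorphic to $W$. So the essential claim is this: over a bounded Dedekind prime ring, every extension of $W$ by $V$ with $V\not\cong W$ splits, i.e. $\operatorname{Ext}^1_R(W,V)=0$.

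For this we use boundedness. The annihilator $P=\operatorname{ann}(V)$ of a simple module is a primitive ideal, hence a nonzero maximal two-sided ideal; it is nonzero because $R$ is bounded. In a bounded Dedekind prime ring, $R/P$ is a simple artinian ring, so there is exactly one simple module up to isomorphism annihilated by $P$. Hence $V\not\cong W$ forces $P\ne Q:=\operatorname{ann}(W)$.

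Now look at $L$. It is annihilated by the product $QP$: first $LQ\subseteq U$, then $UP=0$. In a Dedekind prime ring, distinct maximal ideals commute, so $QP=PQ$. Moreover $P+Q=R$, and comaximality gives
\[
PQ = P\cap Q .
\]
By the Chinese Remainder Theorem,
\[
R/(P\cap Q)\cong R/P\times R/Q .
\]
Thus $L$ is a module over $R/P\times R/Q$, and it decomposes as $L=LP'\oplus LQ'$ via the central idempotents of this product. The two summands are annihilated by $Q$ and by $P$ respectively, so they are the $W$-part and the $V$-part. Hence $L\cong V\oplus W$, which settles the length-two case.

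\textbf{Main obstacle.} The hardest step is justifying the structural facts about bounded Dedekind prime rings used above:
\begin{itemize}
\item annihilators of simple modules are nonzero maximal ideals;
\item a simple module is determined by its annihilator;
\item maximal ideals commute, so $QP=PQ=P\cap Q$.
\end{itemize}
I would cite these from McConnell--Robson, Ch.~5, where the ideals of a Dedekind prime ring form a free abelian group generated by the maximal ideals. The key point to verify is that boundedness is exactly what makes $\operatorname{ann}(V)\neq 0$. In the Weyl algebra $A_1$ this fails, since $A_1$ is simple and so $\operatorname{ann}(V)=0$; there non-split extensions between non-isomorphic simples genuinely occur, so the proposition does not hold.
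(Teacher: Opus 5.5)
Your proof is correct. The reduction to a length-two module with non-isomorphic factors matches the paper's, but you handle that case by a genuinely different argument.

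The paper cites a splitting result from Levy--Robson (Thm.~15.4, applicable by Lemma~14.3): in a bounded Dedekind prime ring, every extension between non-isomorphic simple modules splits. You instead derive the splitting directly from the ideal theory:
\begin{enumerate}
\item $P=\operatorname{ann}(V)$ and $Q=\operatorname{ann}(W)$ are nonzero.
\item $R/P$ and $R/Q$ are simple artinian, so $P\ne Q$.
\item $L\cdot QP=0$, together with $QP=PQ=P\cap Q$ and $P+Q=R$, gives $L=LP\oplus LQ$, with $LQ=U\cong V$ and $LP\cong W$. This is presumably what your undefined $LP'$, $LQ'$ were meant to be.
\end{enumerate}
For the first step you should add one detail. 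Write $V\cong R/I$ with $I$ a maximal right ideal. Since $R\ne\quo(R)$, the ring has no minimal right ideals, so $I$ is essential and contains some $a\in R^\bullet$. The nonzero ideal contained in $aR$ then annihilates $V$.

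Your route is self-contained and shows exactly where each hypothesis enters. Boundedness is used only to make annihilators of simple modules nonzero. The Dedekind condition is used only through the commutation $QP=PQ=P\cap Q$. That commutation is precisely what fails in HNP rings such as $T_2$, where $Q_1Q_2\ne Q_2Q_1$, and there non-split extensions between non-isomorphic simple modules do occur.

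The paper's citation is shorter and places the statement within Levy--Robson's general theory of modules over HNP rings. However, it hides the mechanism that your argument makes explicit.
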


\begin{proof}
    By induction on the length it suffices to check this for modules of length two.
    Let $M$ be a module with a submodule $N$ such that $M/N \cong V$ and $N\cong W$ are non-isomorphic simple modules $V$,~$W$.
    Then there is a short exact sequence
    \[
    \begin{tikzcd}
    0 \arrow{r} & N \arrow{r} & M \arrow{r} & M/N \arrow{r} & 0.
    \end{tikzcd}
    \]
    In \emph{bounded} Dedekind prime rings any such sequence splits.
    This follows from \cite[Thm.~15.4]{levy-robson11}, which applies because of \cite[Lemma 14.3]{levy-robson11}.
    So $M = N \oplus N'$ with $N' \cong M/N$, and $M$ also has a composition series with composition factors transposed.
\end{proof}

Finally, we obtain the main result, a transfer homomorphism for $R^\bullet$.

\begin{theorem}[{\cite[Thm.~4.4]{smertnig19}}]
    If $R$ is a bounded Hermite Dedekind prime ring, then there exists a transfer homomorphism from $R^\bullet$ to a monoid of zero-sum sequences.
\end{theorem}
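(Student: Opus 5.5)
Our plan is to compose two transfer homomorphisms. Let $H \subseteq \bN_0^{(\mathcal S(R))}$ be the monoid from Theorem~\ref{t:nc-div-hom}, and let $\psi\colon R^\bullet \to H$, $a \mapsto (R/aR)$. The map $\psi$ is a monoid homomorphism by the computation $(R/abR)=(R/aR)+(R/bR)$ in the proof of Theorem~\ref{t:nc-div-hom}. Since the inclusion $H \hookrightarrow \bN_0^{(\mathcal S(R))}$ is a divisor homomorphism into a free abelian monoid, Theorem~\ref{t:transfer} gives a transfer homomorphism $\theta\colon H \to \cB(G_0)$, where $G_0 \subseteq \Cl(H \hookrightarrow \bN_0^{(\mathcal S(R))})$ is the set of classes of simple modules. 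A composition of transfer homomorphisms is again a transfer homomorphism: surjectivity composes, the unit condition composes, and lifting of factorizations composes by applying (T2) twice. So it suffices to show that $\psi$ is a transfer homomorphism. Note that $H$ is reduced, being a submonoid of a free abelian monoid.

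For (T1), $\psi$ is surjective by the definition of $H$. If $\psi(a)=0$, then $R/aR$ has length zero, so $aR=R$ and $a$ is right invertible. In $R^\bullet$, with $R$ noetherian, right invertible elements are invertible: from $ab=1$ we get that $b$ is a non-zero-divisor, so $b$ is not a left zero-divisor and $ba=1$ by the standard argument in noetherian rings (or via $\quo(R)$, a matrix ring over a division ring, hence Dedekind-finite).

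The main step is (T2). Suppose $(R/aR)=h_1+h_2$ with $h_1=(R/b_1R)$ and $h_2=(R/c R)$ in $H$.
\begin{enumerate}
\item By Proposition~\ref{p:permute-composition-factors}, $R/aR$ has a composition series whose first $\ell(h_2)$ factors from the bottom realize $h_2$. Equivalently, there is a right ideal $J$ with $aR \subseteq J \subseteq R$, $(R/J)=h_1$ and $(J/aR)=h_2$.
\item Since $(R/J)=(R/b_1R)$, the Hermite argument preceding Theorem~\ref{t:nc-div-hom} shows that $J$ is free, so $J=bR$ with $b \in R^\bullet$. Then $b$ is regular because $J$ contains the regular element $a$ and $J\cong R$ (see \cite[Prop.~3.1.15]{mcconnell-robson01}).
\item Write $a=bc'$. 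Then $c'$ is a non-zero-divisor since $a$ and $b$ are, and $R/c'R \cong bR/bc'R = J/aR$ via left multiplication by $b$.
\end{enumerate}
Hence $\psi(b)=h_1$, $\psi(c')=h_2$ and $a=bc'$.

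The point needing most care is step (a). Proposition~\ref{p:permute-composition-factors} lets us reorder the composition factors of a composition series of $R/aR$ arbitrarily. We choose an ordering in which the factors of $h_2$ sit at the bottom and take $J/aR$ to be the corresponding submodule of that series. The remaining checks are routine: $c'$ is regular and the lift respects the chosen decomposition. Together with Theorem~\ref{t:transfer}, this yields the transfer homomorphism $\theta\circ\psi\colon R^\bullet \to \cB(G_0)$.
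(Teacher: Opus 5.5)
Your proposal is correct and follows the paper's proof. You reduce to showing that $a \mapsto (R/aR)$ is a transfer homomorphism onto $H$, use Proposition~\ref{p:permute-composition-factors} to rearrange composition factors and obtain an intermediate right ideal, apply the Hermite argument to see that this ideal is principal, and factor $a$ through its generator. You also spell out (T1), the regularity of the factors, and the fact that transfer homomorphisms compose, all of which the paper leaves implicit.
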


\begin{proof}
    It suffices to show that $\theta\colon R^\bullet \to H$, $a \mapsto (R/aR)$, with $H$ as in Theorem~\ref{t:nc-div-hom}, is a transfer homomorphism.
    The non-trivial part is that $\theta(a)=(R/d_1R) + (R/d_2R)$ with $d_i \in R^\bullet$ implies $a=b_1b_2$ with $b_i \in R^\bullet$ such that $(R/b_iR)=(R/d_iR)$.

    Rearranging composition factors of $R/aR$, we find a right ideal $R \supseteq I \supseteq aR$ with $(R/I)=(R/d_1R)$.
    As in Theorem~\ref{t:nc-div-hom}, then $I=b_1R$ with $b_1 \in R^\bullet$.
    Because $aR \subseteq b_1R$, there exists $b_2 \in R^\bullet$ with $a=b_1b_2$ and $(R/b_2R)=(I/aR)=(R/d_2R)$.
\end{proof}

A nice thing about this result is that the target monoids are the same ones as in the commutative case!
We can therefore make use of the vast literature on zero-sum sequences over abelian groups even in the noncommutative setting.

The ring $S$ from the beginning of this section is Hermite and has class group $\bZ/2\bZ$ \cite[Table 2]{smertnig-voight19}, with the two simple modules $V$ and $W$ from $(S/(1-2i+k)S)$ both representing the non-trivial class.
So we finally conclude that $S$ is half-factorial.

Unfortunately, dropping either the boundedness condition or the Hermite condition makes the conclusion fail catastrophically \cite[Thm.~1.2]{smertnig13}\cite[\S 6]{smertnig19}.
However, in the classical setting of maximal orders in central simple algebras over number fields, the boundedness condition is always satisfied.
By a deep result of Eichler, the Hermite condition can only fail in the case of quaternion algebras, where the failure has been completely classified \cite{smertnig-voight19}.
In fact, here the Hermite condition is \emph{necessary} for the existence of a transfer homomorphism to a monoid of zero-sum sequences, and its absence means that factorizations behave quite wildly \cite[Thm.~1.2]{smertnig13}.

The methods generalize to bounded Hermite HNP rings \cite{smertnig19}.
However, let us instead next turn our attention from elements to two-sided ideals.

\section{Uncharted Peaks: Ideals in HNP rings} \label{sec:hnp}

In a Dedekind prime ring, every nonzero ideal factors uniquely as a product of maximal ideals \cite[Thm.~5.2.9]{mcconnell-robson01}.
For two distinct maximal ideals $M$,~$N$ we have $MN = M \cap N = NM$, so this multiplication is even commutative, exactly as in the commutative case.

Let $R$ be an arbitrary HNP ring with $R \ne \quo(R)$.
The following ring $T_1$ is the easiest example of such a ring that is not Dedekind.
For a prime $p$, let
\[
T_1 \coloneqq \begin{bmatrix}
    \bZ & (p) \\
    \bZ & \bZ
\end{bmatrix} \qquad\text{and}\qquad M \coloneqq \begin{bmatrix} (p) & (p) \\ \bZ & \bZ \end{bmatrix}.
\]
Here $M \in \Max(T_1)$ is idempotent, so the ideals cannot form a free abelian monoid.

Our goal is to understand the multiplicative structure of the ideals.
For a more prototypical example, we consider a slightly bigger ring $T_2$.
Let $D$ be a discrete valuation ring with prime element $\pi$.
For instance, take $D = \bZ_{(p)} = \{\, \frac{a}{b} : a,b \in \bZ,\ p \nmid b \,\}$, the localization of $\bZ$ at the prime ideal $(p)$, with $\pi = p$.
Let
\[T_2 \coloneqq \begin{bmatrix}
        D & (\pi) & (\pi) \\
        D & D & (\pi) \\
        D & D & D \end{bmatrix}
\qquad\text{and}\qquad
J(T_2) \coloneqq \begin{bmatrix}
        (\pi) & (\pi) & (\pi) \\
        D & (\pi) & (\pi) \\
        D & D & (\pi) \end{bmatrix}.
\]
Observing that $1-A$ is invertible in $T_2$ for $A \in J(T_2)$, together with $T_2/J(T_2) \cong (D/(\pi))^3$, shows that $J(T_2)$ is the Jacobson radical of $T_2$.
The ring $T_2$ thus has three maximal ideals:
\[
Q_1 = \begin{bmatrix}
        D & (\pi) & (\pi) \\
        D & D & (\pi) \\
        D & D & (\pi) \end{bmatrix},\quad 
Q_2 = \begin{bmatrix}
        D & (\pi) & (\pi) \\
        D & (\pi) & (\pi) \\
        D & D & D \end{bmatrix},\quad
Q_3 = \begin{bmatrix}
        (\pi) & (\pi) & (\pi) \\
        D & D & (\pi) \\
        D & D & D \end{bmatrix}.
\]

We shall see how the pictures in Figures \ref{fig:some-divisors} and \ref{fig:divisor-factorization} represent these ideals and their multiplication; we will not have enough space to answer the \emph{why}.

First we need to understand the set of all nonzero ideals.
Before, it was enough to express an ideal as a product of maximal ideals and note the multiplicity of each maximal ideal, leading to the classical divisor theory $\partial$ in Section~\ref{sec:commutative}.
Now
\[
Q_1 Q_2 = \begin{bmatrix}
        D & (\pi) & (\pi) \\
        D & (\pi) & (\pi) \\
        D & (\pi) & (\pi)
\end{bmatrix},
\quad\text{but}\quad
Q_2 Q_1 = \begin{bmatrix}
    D & (\pi) & (\pi) \\
    D & (\pi) & (\pi) \\
    D & D & (\pi)
\end{bmatrix} = Q_1 \cap Q_2.
\]
So $Q_1 Q_2 \ne Q_2 Q_1$, and the previous approach is clearly no longer sufficient.
One can further compute $Q_1 Q_2 = Q_1 Q_2 Q_1$, so the monoid of ideals is not even cancellative.

To recover a good notion of divisors, we take a hint from Section~\ref{sec:noncommutative} and consider composition series.
Only this time, we do not consider a maximal chain of right ideals between $R$ and an ideal $I$, but a maximal chain of two-sided ideals.

\begin{proposition}[{\cite[\S5]{rump-yang16}}]
    Let $I$ be a nonzero ideal of $R$, and let
    \[
    R = I_0 \supsetneq I_1 \supsetneq I_2 \supsetneq \cdots \supsetneq I_n = I
    \]
    be a maximal chain of ideals.
    \begin{enumerate}
        \item For each $k$, there exists a unique maximal ideal $P_k$ of $R$ such that $P_k I_{k-1} \subseteq I_k$.
        \item The ideals $P_1$, \dots,~$P_n$ are uniquely determined by $I$, up to permutation.
    \end{enumerate}
\end{proposition}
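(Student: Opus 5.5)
The plan is to read the factors $I_{k-1}/I_k$ as \emph{simple bimodules} and to take $P_k$ to be their left annihilator. Part~(1) then reduces to showing that this annihilator is a maximal ideal. Part~(2) should follow from a Jordan--Hölder argument in the lattice of two-sided ideals, i.e.\ the lattice of $R$-$R$-sub-bimodules of $R$.

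For (1), fix $k$ and set
\[
A_k \coloneqq \{\, r \in R : r I_{k-1} \subseteq I_k \,\}.
\]
This is a two-sided ideal, and it is proper because $R I_{k-1} = I_{k-1} \not\subseteq I_k$.

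\emph{$A_k$ is nonzero.} It contains $I$, since $I I_{k-1} \subseteq I \subseteq I_k$. Moreover $I \ne 0$ by hypothesis. For later use I also record that, by Goldie's Theorem, the nonzero ideal $I$ contains a regular element $c$. So $R/I$ is a quotient of $R/cR$ and has finite length by Proposition~\ref{p:finite-length}.

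\emph{$A_k$ is prime.} Suppose $A$, $B$ are ideals with $AB \subseteq A_k$. Then $B I_{k-1} + I_k$ is an ideal lying between $I_k$ and $I_{k-1}$. By maximality of the chain it equals either $I_k$, in which case $B \subseteq A_k$, or $I_{k-1}$. In the second case
\[
A I_{k-1} = A(B I_{k-1} + I_k) \subseteq AB\, I_{k-1} + I_k \subseteq I_k,
\]
so $A \subseteq A_k$.

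\emph{$A_k$ is maximal.} Here I would invoke the standard fact that in an HNP ring (more generally, in a noetherian prime ring of Krull dimension at most one) every nonzero prime ideal is maximal; this should be citable from \cite[Ch.~5--6]{mcconnell-robson01}. This citation is the step I expect to need the most care. If a self-contained argument is preferred, I would use the finite length of $R/A_k$ (which holds because $A_k \supseteq I$) to see that $R/A_k$ is a prime artinian ring, hence simple, so $A_k$ is maximal.

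Setting $P_k \coloneqq A_k$, uniqueness is then immediate: any maximal ideal $P$ with $P I_{k-1} \subseteq I_k$ satisfies $P \subseteq A_k$, and since $A_k$ is proper, $P = A_k$.

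For (2), observe that the two-sided ideals containing $I$ form a modular lattice, namely sub-bimodules of $R$ under $+$ and $\cap$. This lattice has finite length, because every chain in it is in particular a chain of right submodules of the finite-length module $R/I$. The Jordan--Hölder--Schreier theorem for modular lattices then says the following. Given two maximal chains from $R$ down to $I$, there is a bijection between their steps under which corresponding intervals $[I_k, I_{k-1}]$ and $[I'_j, I'_{j-1}]$ are linked by a finite sequence of transposed intervals. Transposed intervals $[B\cap C, B]$ and $[C, B+C]$ give isomorphic bimodule quotients $B/(B\cap C) \cong (B+C)/C$. Finally, the left annihilator of the factor bimodule is invariant under bimodule isomorphism: it is exactly the ideal $P_k$ from (1). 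Hence corresponding steps carry the same maximal ideal, and the multiset $\{P_1, \dots, P_n\}$ depends only on $I$.
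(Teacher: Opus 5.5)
Your proof is correct. The paper gives no argument for this proposition (it only cites Rump and Yang), so there is nothing in the text to match step by step. What you have written is a valid self-contained ring-theoretic proof.

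The key move is to identify $P_k$ with the left annihilator $\{\,r\in R : rI_{k-1}\subseteq I_k\,\}$ of the simple bimodule $I_{k-1}/I_k$. The cover argument then gives primeness. For maximality, your self-contained route already closes the step you flagged as needing care: a regular element $c\in I$ exists because a nonzero ideal of a prime Goldie ring is an essential right ideal. Together with Proposition~\ref{p:finite-length}, this gives finite length of $R/I$, so $R/A_k$ is a right artinian prime ring and hence simple. You do not need the citation about Krull dimension.

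Part (2) can be shortened. Two-sided ideals are exactly the $R\otimes_{\bZ}R^{\mathrm{op}}$-submodules of $R$, so a maximal chain from $R$ to $I$ is literally a composition series of the bimodule $R/I$. This bimodule has finite length for the reason you give. The ordinary Jordan--H\"older theorem then says the factors $I_{k-1}/I_k$ are unique up to permutation and bimodule isomorphism. Since the left annihilator is an isomorphism invariant, the multiset of the $P_k$ depends only on $I$. This is the same content as the modular-lattice Jordan--H\"older--Schreier theorem you invoke, without the detour through projective intervals.

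Your route carries out literally the heuristic the paper states just before the proposition: imitate the composition-factor divisors of Section~\ref{sec:noncommutative}, with simple bimodules in place of simple right modules. It also shows that the only HNP-specific input is the finite length of $R/I$.
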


This leads to Rump and Yang's definition of divisors.

\begin{definition}[{\cite{rump-yang16,rump25}}]
    The \textbf{divisor} $\partial(I)$ of a nonzero ideal is the formal sum
    \[
        \partial(I) = P_1 + \cdots + P_n \in \bN_0^{(\Max(R))}
    \]
    with $P_1$, \dots, $P_n$ as in the previous proposition.
\end{definition}

In the example, we have $\partial(Q_i) = Q_i$ and $\partial(J(T_2)) = Q_1 + Q_2 + Q_3$.
More interestingly, we have $\partial(Q_2Q_1) = Q_1 + Q_2$ while $\partial(Q_1 Q_2) = 2Q_1 + Q_2$.

To describe ideals through their associated divisors, the map $\partial$ needs to be injective.
The argument is intricate, and we omit it: it proceeds by showing that the lattice of ideals is distributive, meaning $I \cap (J+K) = (I \cap J) + (I \cap K)$ for all $I$,~$J$,~$K$.

\begin{theorem}[{\cite[Prop.~17]{rump-yang16}}]
    The map $\partial$ from nonzero ideals to $\bN_0^{(\Max(R))}$ is injective.
\end{theorem}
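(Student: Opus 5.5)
The plan is to show that the divisor $\partial(I)$ recovers $I$ via a description of $I$ in terms of the lattice of ideals, and to use distributivity of that lattice as the main tool, as the paper hints.

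\emph{Step 1: reduce to finite-length interval lattices.} Since $R$ is noetherian and $R/I$ has finite length as a bimodule interval, the interval $[I,R]$ in the lattice of ideals has finite length. I would first show that this lattice is distributive. The idea is that for ideals $J,K,L$ containing $I$, the modular law holds automatically for submodules. What remains is to exclude a ``diamond'' $M_3$ sublattice. Such a diamond would yield a length-two bimodule quotient with two isomorphic simple bimodule factors admitting three distinct intermediate ideals. In an HNP ring, simple bimodule subquotients $I_{k-1}/I_k$ are annihilated by the maximal ideal $P_k$. I would try to show that $\operatorname{Ext}$-type extensions between simple bimodules $R/P$ with the same $P$ cannot split in a non-unique way, using that $P/P^2$-type quotients are uniserial-like; this uses the hereditary property (invertible or idempotent maximal ideals, and cycles of idempotent maximal ideals).

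\emph{Step 2: describe $I$ through its divisor.} In a finite distributive lattice, by Birkhoff's theorem the interval $[I,R]$ is determined by its poset of join-irreducibles. Each maximal chain from $R$ to $I$ lists each join-irreducible exactly once, labeled by the maximal ideal $P_k$. I would then show that the join-irreducible ideals of $R$ (over all $I$) are themselves determined by their labels and multiplicities. Concretely, for each maximal $P$ and $n\ge 1$, there is a canonical ideal $E_{P,n}$, built from the cycle of idempotent maximal ideals containing $P$ (or from $P^n$ if $P$ is invertible). I would aim to show that every join-irreducible is such an $E_{P,n}$.

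\emph{Step 3: recover $I$.} Having identified the join-irreducibles, I would show
\[
I=\bigcap_{P}E_{P,\,m_P(I)},
\]
where $m_P(I)$ is the coefficient of $P$ in $\partial(I)$. Since the right-hand side depends only on $\partial(I)$, injectivity follows. The inclusion $\subseteq$ comes from counting how often $P$ occurs in a chain through $E_{P,m}$. Equality then follows by comparing chain lengths (divisor degrees) using distributivity.

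\emph{Main obstacle.} The hardest part is Step 2, namely identifying the join-irreducibles and their labels. Idempotent maximal ideals in a cycle interact noncommutatively (as $Q_1Q_2\ne Q_2Q_1$ shows). So I would first treat the local case, where $R$ is localized at a cycle and looks like the matrix ring $T_2$, and check the description there explicitly. I would then globalize by noting that ideals decompose according to distinct cycles, which commute.
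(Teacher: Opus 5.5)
\paragraph{Gap: distributivity is too weak.} Your outline follows the paper's hint, namely distributivity of the ideal lattice. But distributivity is not what injectivity depends on, and the step that does carry the weight is only asserted.

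Excluding an $M_3$ sublattice rules out subquotients $S\oplus S$ with $S$ a simple bimodule, i.e.\ parallel \emph{isomorphic} factors. However, $\partial$ records only the left annihilator $P_k$ of $I_{k-1}/I_k$, and non-isomorphic simple subquotients can carry the same label. In $T_2$, $R/Q_1$ has left and right annihilator $Q_1$, whereas $(Q_1\cap Q_2)/Q_1Q_2$ has left annihilator $Q_1$ and right annihilator $Q_2$. (So, contrary to your Step 1, the simple subquotients are not all of the form $R/P$.) A distributive lattice could therefore still contain an ideal $Y$ with two distinct covers $Y_1,Y_2$, both labeled $P$, and then $\partial(Y_1)=\partial(Y)-P=\partial(Y_2)$.

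In fact, using only modularity, injectivity of $\partial$ is equivalent to the following lemma: for every nonzero ideal $X$ and maximal ideal $P$, at most one maximal subideal of $X$ contains $PX$. Equivalently, $m_P(A+B)=\min(m_P(A),m_P(B))$; that is, for each $P$ and $n$ the ideals with $m_P\ge n$ have a largest member $E_{P,n}$. Given this lemma, your Step 3 is immediate and needs no Birkhoff. Indeed $I\subseteq J:=\bigcap_P E_{P,m_P(I)}$, monotonicity of the $m_P$ forces $\partial(J)=\partial(I)$, and so $I=J$ by comparing lengths. Thus this lemma is the whole content of the theorem, and your Step 2 (``join-irreducibles are determined by their labels and multiplicities'') presupposes it rather than proving it.

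\paragraph{Why the local check does not close it.} You rightly flag Step 2 as the obstacle, but checking it in $T_2$ and then ``globalizing'' is not an argument for arbitrary HNP rings. HNP rings need not be PI (e.g.\ idealizers of maximal right ideals in the Weyl algebra), so you cannot expect a local model of this shape in general. Moreover, the splitting of ideals along cycles would itself need proof.

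In $T_2$ the lemma holds for a specific reason. The $i$-th row of any ideal is a right $T_2$-lattice in a row space, and these lattices form a chain. So its exponents form a staircase $a\le\dots\le a+1$ determined by the row sum, and $X/PX$ is uniserial. An intrinsic substitute for this uniseriality is the missing idea.

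\paragraph{Two smaller points.}
\begin{itemize}
\item The formula $I=\bigcap_P E_{P,m_P(I)}$ presents $I$ as a meet, so the relevant ideals are meet-irreducible, not join-irreducible. For example, $E_{Q_1,1}=Q_1$ is maximal, yet $Q_1=(Q_1\cap Q_2)+(Q_1\cap Q_3)$.
\item Birkhoff's theorem applied to $[I,R]$ recovers that interval only up to abstract lattice isomorphism, which says nothing about which ideal $I$ is.
\end{itemize}
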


Thus, in principle, we should be able to describe the multiplication of ideals using their divisors: given ideals $I$ and $J$ with divisors $D=\partial(I)$ and $E=\partial(J)$, what is $\partial(IJ)$ in terms of $D$ and $E$? Looking at our example $Q_1 Q_2 \ne Q_2 Q_1$ above, we see that the answer is not simply $D+E$.
Indeed, adding divisors is a commutative operation, while ideal multiplication is noncommutative.

We first need a better understanding of the set of maximal ideals $\Max(R)$.
There is a natural bijection $\tau$ on the set of nonzero ideals, although the precise definition will not be important for us:
\[
    I \mapsto \tau(I) \coloneqq (R\! :_l\! (R\! :_l\! I))\quad\text{with}\quad (R\!:_l\! I) \coloneqq \{\, x \in \quo(R) : xI \subseteq R \,\}.
\]
The map $\tau$ restricts to a bijection on maximal ideals, and has finite orbits \cite[\S 5]{rump-yang16}.
This partitions $\Max(R)$ into disjoint finite \textbf{cycles} of maximal ideals, with each cycle having a cyclic order induced by the action of $\tau$.

In the example above, there is just one cycle which has length $3$: $\tau(Q_1)=Q_2$, $\tau(Q_2)=Q_3$, and $\tau(Q_3)=Q_1$.
In general, arbitrary cycle structures are possible \cite[\S 25]{levy-robson11}. For instance, the ring
\[
\begin{bmatrix}
    \bZ & (p) & (pq) \\
    \bZ & \bZ & (pq) \\
    \bZ & \bZ & \bZ
\end{bmatrix}
\]
with $p \ne q$ primes, has one $2$-cycle (over $q$), one $3$-cycle (over $p$), and countably many $1$-cycles (over the remaining primes; these are uninteresting).
It turns out that disjoint cycles behave independent of each other, so our example of a single $3$-cycle is already sufficiently general to understand the general situation.

We now explain how to view divisors as functions.

\begin{definition}
    Let $\widetilde{\Max(R)} \coloneqq \Max(R) \times \bZ$.
\end{definition}

We view this as a covering space of $\Max(R)$: the point $(P,n)$ lies above $P$ for all $n$.
If $C = \{ P_1, \dots, P_l \}$ is a cycle of maximal ideals, the corresponding set $C \times \bZ$ is totally ordered lexicographically: $(P_i, n) \le (P_j, m)$ if $n < m$ or if $n = m$ and $i \le j$.

\begin{definition}[{\cite[\S 8]{rump-yang16}\cite[\S 5]{rump25}}]
    For $D \in \bN_0^{(\Max(R))}$, let $\widehat D \colon \widetilde{\Max(R)} \to \widetilde{\Max(R)}$ be the function that moves each point $(P,n)$ forward by $D(P)$ steps in the natural order.
\end{definition}

For example, for the divisor $D=2Q_1 + Q_3$,
\[
    \widehat D(Q_1,n) = (Q_3,n), \quad \widehat D(Q_2,n) = (Q_2,n), \quad \widehat D(Q_3,n) = (Q_1,n+1).
\]
This embeds divisors into the set of self-maps of $\widetilde{\Max(R)}$.

\begin{figure}
    \centering
    \includegraphics{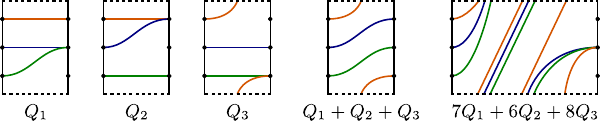}
\caption{Visualization of the divisors corresponding to the idempotent maximal ideals $Q_1$, $Q_2$, $Q_3$, the maximal invertible ideal $J(T_2)$ with $\partial(J(T_2)) = Q_1 + Q_2 + Q_3$, and the more complicated divisor $7Q_1 + 6Q_2 + 8Q_3$. (Colors  for easier readability.)} \label{fig:some-divisors}
\end{figure}
The covering space idea gives a way of visualizing such functions.
Think of a cylinder $[0,1] \times S^1$ with $l$ marked points $P_1, \dots, P_l$, in cyclic order, on each of the two boundary circles.
Now $\widehat D$ can be represented by drawing $l$ paths, connecting each point $P_i$ on the left circle to some point on the right circle, moving forward $D(P_i)$ steps.
Paths only move forward in the cyclic order and may wind around the cylinder multiple times, as illustrated in Figure~\ref{fig:some-divisors} (the paths are understood up to homotopy).

For instance, if $D(Q_1)=4$, then the path moves forward $4$ steps in the cyclic order. Thus, it winds around the cylinder once ($3$ steps), then connects to $Q_2$.
If $D(Q_1)=6$, the path winds around twice before connecting $Q_1$ to itself.

The information that $\widehat D$ contains is: (i) which points on the left side connect to which points on the right and (ii) how many times each path winds around the cylinder.

Self-maps on a set can be composed by function composition, so this gives us a way to compose divisors!
To match ideal multiplication, the order of composition needs to be reversed (or think of functions as composing from left to right).

\begin{definition}
    The \defit{composition} of two divisors $D$, $E$ is the unique divisor $D \circ E$ with $\widehat{D \circ E} = \widehat E \circ \widehat D$.
\end{definition}

\begin{figure}
    \centering
\includegraphics{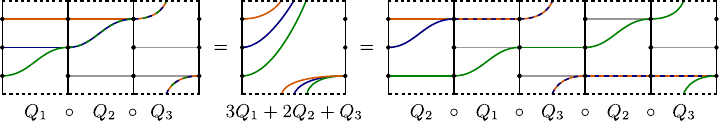}
\caption{The divisor $D=3Q_1+2 Q_2 + Q_3$ factored in two different ways, as $D=Q_1 \circ Q_2 \circ Q_3$ and $D=Q_2 \circ Q_1 \circ Q_3 \circ Q_2 \circ Q_3$.}\label{fig:divisor-factorization}
\end{figure}
Visually, this corresponds to the natural gluing of two diagrams, as in Figure~\ref{fig:divisor-factorization}.
From the diagram it is immediate that each $Q_i$ is idempotent.
In this visualization, it is also easy to observe $Q_1 \circ Q_2 = Q_1 \circ Q_2 \circ Q_1$.
Figure~\ref{fig:divisor-factorization} shows a slightly more complicated example, factoring $D=3Q_1 + 2 Q_2 + Q_3$ in two different ways.

Rump and Yang showed that this composition corresponds to ideal multiplication.
\begin{theorem}[{\cite[Thm.~5]{rump-yang16}, \cite[Thm.~4]{rump25}}]
    For all nonzero ideals $I$,~$J$ of $R$, it holds that $\partial(IJ) = \partial(I) \circ \partial(J)$.
    In particular, the map $\partial$ is an injective homomorphism from nonzero ideals to $(\bN_0^{(\Max(R))}, \circ)$.
\end{theorem}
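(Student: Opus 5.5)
Since $\partial$ is already known to be injective, everything reduces to the formula $\partial(IJ)=\partial(I)\circ\partial(J)$. I would first reduce to the case where $J$ is a single maximal ideal. Every nonzero ideal $J$ has a maximal chain $R=J_0\supsetneq J_1\supsetneq\cdots\supsetneq J_m=J$. My plan is to show that $J$ is a product $M_1\cdots M_m$ of maximal ideals (possibly with repeats), with $\partial(J)=M_1\circ\cdots\circ M_m$. In HNP rings every nonzero ideal is a product of maximal ideals (invertible ideals factor into maximal invertibles, and those are intersections of the idempotent maximal ideals along a cycle), so this seems reachable. Once it holds, associativity of $\circ$, which is just associativity of function composition, reduces the theorem to
\[
\partial(IM)=\partial(I)\circ M \quad\text{for } M\in\Max(R),
\]
by induction on the number of maximal factors of $J$.

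Second, I would localize at the cycles. The definitions say that disjoint cycles behave independently, and $\widehat D$ acts on each $C\times\bZ$ separately. So I would argue that $\partial(I)$ restricted to a cycle $C$ is determined by the $C$-primary component of $I$. This needs a primary decomposition of ideals over the finitely many cycles, using the distributivity of the ideal lattice from \cite{rump-yang16}. After this, it suffices to treat one cycle $C=\{P_1,\dots,P_l\}$ with $\tau(P_i)=P_{i+1}$.

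Third, on a single cycle I would compute $\partial(IM)$ directly from the definition via maximal chains. Start from a maximal chain from $R$ down to $I$ with labels $P_1,\dots,P_n$. Extend it by a maximal chain of two-sided ideals between $I$ and $IM$, and read off the labels $P_k$ with $P_kI_{k-1}\subseteq I_k$. The factors of $I/IM$ should be governed by $\tau$. If $M$ is invertible (the case $l=1$), then $IM=MI$ and the new divisor is $\partial(I)+M$, which agrees with the composition. If $M=P_j$ is idempotent, then $I/IP_j$ is either zero or has labels determined by where $\widehat{\partial(I)}$ sends the points lying over $P_j$. This is where the rule ``move forward $D(P)$ steps'' should emerge, since $P_kI_{k-1}\subseteq I_k$ ties the left action to the right action through $\tau$. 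I would verify the model on $T_2$ first: $\partial(Q_2Q_1)=Q_1+Q_2$ and $\partial(Q_1Q_2)=2Q_1+Q_2$. To be safe, I would prove the general cycle case by Morita-reducing locally to the block upper-triangular ring over a local Dedekind prime ring, which is the standard local structure of HNP rings. There $T_2$-style matrix computations can be done explicitly for every ideal.

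The main obstacle is this third step: identifying the two-sided composition factors of $I/IP_j$ and showing they match $\widehat{P_j}\circ\widehat{\partial(I)}$, including the winding numbers. My first attempt would be the explicit local matrix description. Every ideal of the local block-triangular ring is a ``staircase'' of powers of $\pi$. Its divisor records the step heights, and multiplying by $Q_j$ raises a single entry pattern in a way that visibly corresponds to composing the path diagrams.
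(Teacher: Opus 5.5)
\textbf{There is a genuine gap in your first step.} It rests on a false claim: in an HNP ring it is not true that every nonzero ideal is a product of maximal ideals. Take a cycle $C=\{P_1,\dots,P_l\}$ with $l\ge 2$. The maximal invertible ideal $X=P_1\cap\cdots\cap P_l$ is an intersection, and it is not a product of maximal ideals in any order.

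To see this, note that any maximal factor of such a product contains $X\supseteq P_1\cdots P_l$, so, being prime, it equals some $P_i$. These $P_i$ are idempotent: a maximal ideal of an HNP ring is invertible or idempotent, and invertible ones are fixed by $\tau$. But if $X=AP_i$, then $XP_i=AP_i^2=X$, and multiplying on the left by $X^{-1}$ gives $P_i=R$, a contradiction.

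Concretely, $J(T_2)=gT_2=T_2g$, where $g$ has entry $\pi$ at $(1,3)$, entries $1$ at $(2,1)$ and $(3,2)$, and $0$ elsewhere. So $J(T_2)$ is invertible, while every $Q_i$ is idempotent. For instance, $Q_3Q_2Q_1$ has $(1,3)$-entry $(\pi^2)$ and is strictly smaller than $Q_1\cap Q_2\cap Q_3$. The divisor picture shows the same thing: $\widehat{Q_1+Q_2+Q_3}$ is the bijective one-step shift, whereas each $\widehat{Q_i}$ sends two points to the same point, so no composite of them is injective. Hence your induction on the number of maximal factors never reaches $J=J(T_2)$, and the reduction to $\partial(IM)=\partial(I)\circ M$ with $M\in\Max(R)$ does not cover all $J$.

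The correct generating set is maximal ideals \emph{together with} maximal invertible ideals. However, that is \cite[Thm.~6]{rump-yang16}, which the paper presents as a consequence of this theorem and the subsequent image characterization. Using it here is circular unless you prove it independently.

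Even granting it, you need a second base case $J=X$. Since $\widehat{\partial(X)}$ is the one-step shift on $C\times\bZ$ (and the identity off $C$), this amounts to proving $\partial(IX)=\partial(I)+\partial(X)$. Your analysis of $I/IP_j$ does not address this. Nor is it the commuting situation of your case $l=1$: in $T_2$ one has $J(T_2)Q_1=Q_3J(T_2)\neq Q_1J(T_2)$.

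Your third step, as written, restates the claim for $J\in\Max(R)$ rather than proving it. Moreover, the block-triangular normal form is a structure theorem for semiperfect HNP rings, such as hereditary orders over complete discrete valuation rings. Passing from a general HNP ring to such a model, compatibly with two-sided ideals, their products and maximal chains, needs its own justification.

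Within the local model there is a cleaner route that avoids factoring $J$ at all:
\begin{enumerate}
\item Identify the ordered set $C\times\bZ$ with the descending chain $(L_x)_{x\in\bZ}$ of indecomposable projective right lattices. In $T_2$, the points $(Q_1,n)$, $(Q_2,n)$, $(Q_3,n)$ correspond to $\pi^n$ times the rows $(D,D,D)$, $(D,D,\pi)$, $(D,\pi,\pi)$.
\item Prove $L_xI=L_{\widehat{\partial(I)}(x)}$ for every nonzero ideal $I$.
\item Then $L_x(IJ)=(L_xI)J$ gives $\widehat{\partial(IJ)}=\widehat{\partial(J)}\circ\widehat{\partial(I)}$ for arbitrary $I$ and $J$ at once.
\end{enumerate}
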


The reader may have noticed that in the diagrams paths can merge but never cross.
Having such a representation precisely characterizes the image of $\partial$.
Algebraically it can be stated as follows.

\begin{theorem}[{\cite[Thm.~5]{rump25}}]
    A divisor $D$ is in the image of $\partial$ if and only if $D(\tau(P)) \ge D(P) - 1$ for all $P \in \Max(R)$.
\end{theorem}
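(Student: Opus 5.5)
Both directions reduce to a single cycle. Ideals supported on disjoint cycles commute, and the composition $\circ$ acts on each $C \times \bZ$ separately. So fix a cycle $C = \{P_1, \dots, P_l\}$ with $\tau(P_i) = P_{i+1}$, indices taken modulo $l$, and identify $C \times \bZ$ with $\bZ$ via $(P_i, n) \mapsto nl + i$. Then $\widehat D$ becomes the map $f_D(x) = x + D(P_x)$, where $P_x$ is the ideal indexed by $x \bmod l$. This $f_D$ commutes with translation by $l$. The condition $D(\tau(P)) \ge D(P) - 1$ reads $f_D(x+1) = x + 1 + D(P_{x+1}) \ge x + D(P_x) = f_D(x)$. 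So the condition says exactly that $f_D$ is monotone (non-decreasing). This is the algebraic form of the ``paths never cross'' picture.

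For necessity, I use the two preceding theorems: $\partial$ is a homomorphism into $(\bN_0^{(\Max(R))}, \circ)$, and $\partial(P) = P$ for maximal $P$. Every nonzero ideal $I$ is a product of maximal ideals. To see this, take a maximal chain $R = I_0 \supsetneq \cdots \supsetneq I_n = I$. The containment $P_k I_{k-1} \subseteq I_k$ suggests $I_k = P_k I_{k-1}$ or $I_k = I_{k-1} P'_k$. If that is not directly available, I argue on divisors instead: a one-step divisor $P$ gives $f_P$ equal to the identity except at the residue of $P$, where it moves forward by one. Such an $f_P$ is monotone. Compositions of monotone maps are monotone, so $\partial(I)$ satisfies the condition. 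For the base case I also check directly that $\partial(R) = 0$ gives the identity. The obstacle in this direction is justifying that every ideal is a product of maximal ideals in an HNP ring. If that is awkward, I would argue by induction on the length $n$ of the chain, showing that $\partial(I_k) - \partial(I_{k-1})$ is a single $P_k$ and that the corresponding map stays monotone. This comes down to the fact that $\partial(I_{k-1}) \circ P_k$ or $P_k \circ \partial(I_{k-1})$ equals $\partial(I_{k-1}) + P_k$, which in turn needs the ideal identity $I_k = I_{k-1}P$ or $I_k = P I_{k-1}$.

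For sufficiency, I show combinatorially that every monotone, $l$-periodic, non-decreasing-step $f_D$ with $f_D(x) \ge x$ is a composition of the elementary maps $f_{P_i}$. Then $D = P_{i_1} \circ \cdots \circ P_{i_m} = \partial(P_{i_1} \cdots P_{i_m})$ lies in the image. I argue by induction on $|D| = \sum D(P)$. If $D \ne 0$, choose $x$ with $D(P_x) > 0$ such that $f_D(x) < f_D(x+1)$, or such that $D(P_{x+1}) = 0$ fails in a suitable sense. The goal is a ``last step'' $P$ with $D = D' \circ P$, where $D' = D - P_j$ and $f_{D'}$ is still monotone. Concretely, take $y = f_D(x)$ to be a value attained by $f_D$ such that $y - 1$ is not attained and $y > x$. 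Lowering every preimage of $y$ by one gives $f_{D'}$, which is monotone because $y - 1$ was not attained. Then $f_P \circ f_{D'} = f_D$ for $P = P_{y-1}$, since $f_P$ sends $y - 1$ to $y$ and no other value of $f_{D'}$ equals $y - 1$. The remaining check is that such a $y$ exists whenever $D \ne 0$. If every attained value $y$ with a preimage $x < y$ had $y - 1$ attained, then following the preimages downward would contradict periodicity together with $\sum D > 0$. Pinning down this existence claim is the main obstacle. I would settle it by counting over one period: the map $f_D$ has image of density at most one, and it is strictly below one if some point moves, so some gap must occur just below a moved value.
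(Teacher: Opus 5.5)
Your reformulation of the condition as monotonicity of $f_D$ is correct. However, both directions tacitly assume that the elementary divisors $P$ (with $P$ maximal) generate everything, and this fails on every cycle of length $l\ge 2$ because of maximal invertible ideals. The paper's own ring $T_2$ shows it: $J(T_2)=Q_1\cap Q_2\cap Q_3$ has $\partial(J(T_2))=Q_1+Q_2+Q_3$, whose map is the translation $x\mapsto x+1$, a bijection. Each $f_{Q_i}$ is neither injective nor surjective: it identifies $i$ and $i+1$, and its image misses $i+3\bZ$. So $f_{Q_i}\circ g$ is never surjective and $g\circ f_{Q_i}$ is never injective. Hence $J(T_2)$ is not a product of maximal ideals, and no maximal chain ending at $J(T_2)$ has last step $I_n=I_{n-1}P$ or $I_n=PI_{n-1}$. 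Both of your routes to necessity therefore break down, which is why the theorem stated right after speaks of products of maximal \emph{and maximal invertible} ideals. Necessity needs genuine ring-theoretic input at this point. One option is that factorization theorem \cite[Thm.~6]{rump-yang16}, citing the proof given there rather than the derivation from the present statement suggested in the paper. You would combine it with the fact that a maximal invertible ideal is either an invertible maximal ideal or the intersection $X_C=\bigcap_{P\in C}P$ over a cycle $C$. Its divisor $\sum_{P\in C}P$ acts as a translation, which is monotone, and then your composition argument goes through.

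The same example refutes your sufficiency claim: $Q_1+Q_2+Q_3$ satisfies the condition but is not a composite of elementary divisors. Your existence argument for $y$ fails exactly there. Following preimages downward only proves that $f_D$ is surjective, which is no contradiction. Likewise, the translation has image of density one although every point moves, so your density claim is false. Since $f_D(x+l)=f_D(x)+l$, a surjective monotone $f_D$ is a translation $x\mapsto x+c$, so the missing case is precisely $D=c\sum_{P\in C}P$. (For $l=1$ this is harmless, as $f_P$ is itself the translation.)

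The repair is elementary. For $C=\{P_1,\dots,P_l\}$, the chain $R\supsetneq P_1\supsetneq P_1\cap P_2\supsetneq\cdots\supsetneq X_C$ is maximal, because each factor is isomorphic as a bimodule to the simple ring $R/P_k$. Moreover $P_k(P_1\cap\cdots\cap P_{k-1})\subseteq P_1\cap\cdots\cap P_k$, so $\partial(X_C)=\sum_{P\in C}P$ and $D=\partial(X_C^{c})$.

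In the non-surjective case your induction works after three small corrections. Lower only the preimages $x<y$ of $y$, since a fixed point $f_D(y)=y$ must stay. Do this for all $y+kl$ simultaneously. Note also that $D'$ may fall short of $D$ by more than one unit; this does not affect the induction on $\lvert D\rvert$.
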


A consequence is the following surprisingly non-trivial result.
\begin{theorem}[{\cite[Thm.~6]{rump-yang16}}]
    Every nonzero ideal in $R$ is a product of maximal and maximal invertible ideals.
\end{theorem}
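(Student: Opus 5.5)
We will work entirely on the divisor side, using that $\partial$ is an injective homomorphism onto the set of divisors $D$ with $D(\tau(P)) \ge D(P)-1$ for all $P$. It then suffices to show that every such \emph{admissible} divisor $D$ is a $\circ$-product of divisors of maximal ideals and of maximal invertible ideals. Since disjoint cycles behave independently, we restrict to a single cycle $C=\{P_1,\dots,P_l\}$ with $\tau(P_i)=P_{i+1}$ (indices mod $l$). The two kinds of building blocks are then identified as follows. For a maximal ideal we have $\partial(P_i)=P_i$, the map that moves only the strand at $P_i$ one step forward. For the maximal invertible ideal over $C$, modelled on $J(T_2)$, we have $\partial = P_1+\dots+P_l$, the shift by one step. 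We first check that both are admissible, and we note that the shift $\widehat{S}$ is a bijection commuting with every $\widehat D$.

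The argument is an induction on $\deg D=\sum_P D(P)$. If $D(P)\ge 1$ for every $P\in C$, we write $D=D'\circ S$ with $D'=D-S$. Here $\widehat{D'}$ is $\widehat D$ followed by one backward step, and $D'$ stays admissible because the inequality $D(\tau P)\ge D(P)-1$ is invariant under subtracting a constant. Otherwise some $D(P_j)=0$, and we would like to split off a single maximal ideal on the left or right. We pick an index $i$ with $D(P_i)\ge 1$ but $D(P_{i-1})$ either $0$ or strictly smaller. The plan is to show that $D=P_i\circ E$ (or $E\circ P_i$) for an admissible $E$ of degree $\deg D-1$.

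Composition with $P_i$ on the left sends strand $i$ to position $i+1$ and then applies $\widehat E$. So we need $E$ with $E(P_k)=D(P_k)$ for $k\ne i$ and $1+E(P_{i+1})=D(P_i)$ when evaluated at the strand from $i$. In words, the strand starting at $P_i$ must land in the same place as the strand starting at $P_{i+1}$ would if it started one step later. The condition $D(P_{i+1})\ge D(P_i)-1$ says precisely that strands do not cross, and choosing $i$ where the strand from $P_i$ merges with, or ends exactly one step before, the strand from $P_{i+1}$ should make the choice $E=D-P_i$ (adjusted appropriately) work. Concretely, we will look for $i$ with $D(P_{i+1})=D(P_i)-1$, meaning strand $i$ ends at the same place as strand $i+1$. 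In that case $P_i\circ(D-P_i)=D$, and $D-P_i$ is still admissible except possibly at the inequality between $P_{i-1}$ and $P_i$.

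We expect the main obstacle to be the existence of a good index $i$ in the case where some $D(P_j)=0$ but no merging pair of strands exists. Our first attempt there will be to peel off a maximal ideal on the \emph{right} instead, where the dual condition concerns endpoints: a target point that is hit by no strand. A pigeonhole argument on the non-crossing strands of the diagram should show that if $\widehat D$ is not a bijection on $C\times\bZ$, then either two strands merge or a target point is missed. We will then verify that removing the corresponding $P_i$ preserves the admissibility inequality. In the remaining case $\widehat D$ is a non-crossing bijection, hence a power of the shift, which is handled by the first reduction. The induction terminates at $D=0$, which is the divisor of $R$, the empty product.
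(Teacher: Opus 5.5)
This is the route the paper indicates: it states the theorem as a consequence of the image criterion and gives no further detail. It suffices to factor each admissible $D$, cycle by cycle, into single $P$'s and $S_C=\sum_{P\in C}P$. $S_C$ is the divisor of the maximal invertible ideal $\bigcap_{P\in C}P$, which you should cite or justify rather than read off from $J(T_2)$. Your step $D=(D-S)\circ S$ when $D\ge 1$ on $C$ is correct. But the decisive step is not carried out, and the rule you propose for it is wrong. As you compute, $D=P_i\circ E$ forces $D(P_{i+1})=D(P_i)-1$. Then $E=D-P_i$ is admissible if and only if also $D(P_{i-1})\le D(P_i)$, i.e.\ the strand from $P_i$ is the \emph{first} of the strands merging with it. Your rule ($D(P_{i-1})$ zero or strictly smaller than $D(P_i)$) can exclude every valid index. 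For $D=Q_1+Q_2=\partial(Q_2Q_1)$ in $T_2$, the only merging index is $i=2$, where $D(Q_1)=D(Q_2)=1$, yet $D=Q_2\circ Q_1$. On the other hand, the case you flag as the main obstacle is empty. If no two strands merge, $\widehat D$ is strictly increasing on $C\times\bZ$ and commutes with $(P,n)\mapsto(P,n+1)$, so it is a translation. Hence $D$ is constant on $C$, and with a zero value $D|_C=0$. So the right-hand peeling is unnecessary. There, moreover, the divisor to subtract is in general not the peeled $P_i$: e.g.\ $2Q_1+Q_2=(Q_1+Q_2)\circ Q_2$.

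The missing argument is short. If $D$ is not constant on $C$, some $k$ satisfies $D(P_{k+1})=D(P_k)-1$. Otherwise admissibility gives $D(P_{k+1})\ge D(P_k)$ all the way around the cycle, forcing constancy. Move backwards from $k$ as long as $D(P_{j-1})=D(P_j)+1$; each such $j-1$ is again a merging index. The walk cannot traverse the whole cycle, since that would give $D(P_k)=D(P_k)+l$. So it stops at an $i$ with $D(P_{i+1})=D(P_i)-1$ and, by admissibility, $D(P_{i-1})\le D(P_i)$. Hence $D=P_i\circ(D-P_i)$ with $D-P_i$ admissible of degree one less. If $D$ is constant $c$ on $C$, then $D|_C$ is the $c$-fold $\circ$-power of $S_C$.

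Finally, your remark that $\widehat S$ commutes with every $\widehat D$ is false. We have $S\circ Q_1=Q_3\circ S=Q_1+Q_2+2Q_3$, whereas $Q_1\circ S=2Q_1+Q_2+Q_3$; that is, $J(T_2)Q_1=Q_3J(T_2)\ne Q_1J(T_2)$. Only the deck map $(P,n)\mapsto(P,n+1)$ commutes with all $\widehat D$. Your shift step does not rely on this, since $D=(D-S)\circ S$ holds by direct computation. But note that $S\circ(D-S)\ne D$ in general.
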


Of course, it is now natural to wonder how the operations $\circ$ and $+$ on $\bN_0^{(\Max(R))}$ are related.
This is beyond the scope of this article, and leads to left quasirings \cite{rump22}\cite[Proposition 10]{rump25}, an algebraic structure connected to braces, which appear in the study of set-theoretic solutions to the Yang-Baxter equation \cite{vendramin24}.

\section{The Horizon: Open Problems}
We saw how the classical divisor theory of Dedekind domains can be adapted to study factorizations in Dedekind prime rings, and that a similar theory exists for ideals in HNP rings.
However, many basic questions remain open.
For instance, it is not even known whether each $\sL(a)$ is finite in a noetherian domain \cite{bell-brown-nazemian-smertnig23}, although this is an easy consequence of primary decompositions for commutative noetherian domains.

Among Dedekind prime rings, the first Weyl algebra $A_1 \coloneqq \mathbb C\langle x,y \mid yx - xy = 1 \rangle$ is not half-factorial, because
\[
f = y^2x = (1+yx) y
\]
has $\sL(f)=\{2, 3 \}$.
Since $A_1$ is neither bounded nor Hermite, our techniques do not apply.
How wild can the length sets be? In $\cB(\bZ)$ and in the ring of integer-valued polynomials $\operatorname{Int}(\bZ)$ every finite subset of $\bN_{\ge 2}$ occurs as length set \cite{kainrath99,frisch13}.
Might a similar thing be true for the Weyl algebra?

These questions illustrate that, despite the usefulness of divisor and transfer homomorphisms, much is still to be discovered about factorizations in noncommutative rings.

\medskip
\paragraph*{\textbf{Acknowledgements}}{\small
I thank Alfred Geroldinger and my PhD students Mara Pompili and Daniel Vitas for feedback on an earlier version of this manuscript.}

\medskip
\paragraph*{\textbf{Funding Information}}{\small
This work was supported by the Slovenian Research and Innovation Agency (ARIS) under program P1-0288 and grant J1-60025.}

\bibliographystyle{hyperalphaabbr}
\bibliography{expository-ncfact.bib}

\newcommand{\etalchar}[1]{$^{#1}$}
\begin{thebibliography}{BPA{\etalchar{+}}11}

\bibitem[AM53]{asano-murata53}
K.~Asano and K.~Murata.
\newblock Arithmetical ideal theory in semigroups.
\newblock {\em J. Inst. Polytech. Osaka City Univ. Ser. A}, 4:9--33, 1953.

\bibitem[AM69]{atiyah-macdonald69}
M.~F. Atiyah and I.~G. Macdonald.
\newblock {\em Introduction to commutative algebra}.
\newblock Addison-Wesley Publishing Co., Reading, Mass.-London-Don Mills, Ont.,
  1969.

\bibitem[BBNS23]{bell-brown-nazemian-smertnig23}
J.~P. Bell, K.~Brown, Z.~Nazemian, and D.~Smertnig.
\newblock On noncommutative bounded factorization domains and prime rings.
\newblock {\em J. Algebra}, 622:404--449, 2023.
\newblock \href {http://dx.doi.org/10.1016/j.jalgebra.2023.01.023}
  {\path{doi:10.1016/j.jalgebra.2023.01.023}}.

\bibitem[BC11]{baginski-chapman11}
P.~Baginski and S.~T. Chapman.
\newblock Factorizations of algebraic integers, block monoids, and additive
  number theory.
\newblock {\em Amer. Math. Monthly}, 118(10):901--920, 2011.
\newblock \href {http://dx.doi.org/10.4169/amer.math.monthly.118.10.901}
  {\path{doi:10.4169/amer.math.monthly.118.10.901}}.

\bibitem[BC21]{babei-chari21}
A.~Babei and S.~Chari.
\newblock Metacommutation of primes in {E}ichler orders.
\newblock {\em Acta Arith.}, 197(1):77--92, 2021.
\newblock \href {http://dx.doi.org/10.4064/aa191031-23-5}
  {\path{doi:10.4064/aa191031-23-5}}.

\bibitem[BG14]{baeth-geroldinger14}
N.~R. Baeth and A.~Geroldinger.
\newblock Monoids of modules and arithmetic of direct-sum decompositions.
\newblock {\em Pacific J. Math.}, 271(2):257--319, 2014.
\newblock \href {http://dx.doi.org/10.2140/pjm.2014.271.257}
  {\path{doi:10.2140/pjm.2014.271.257}}.

\bibitem[BGGS15]{baeth-geroldinger-grynkiewicz-smertnig15}
N.~R. Baeth, A.~Geroldinger, D.~J. Grynkiewicz, and D.~Smertnig.
\newblock A semigroup-theoretical view of direct-sum decompositions and
  associated combinatorial problems.
\newblock {\em J. Algebra Appl.}, 14(2):1550016, 60, 2015.
\newblock \href {http://dx.doi.org/10.1142/S0219498815500164}
  {\path{doi:10.1142/S0219498815500164}}.

\bibitem[BPA{\etalchar{+}}11]{baeth-ponomarenko-et-al11}
N.~Baeth, V.~Ponomarenko, D.~Adams, R.~Ardila, D.~Hannasch, A.~Kosh,
  H.~McCarthy, and R.~Rosenbaum.
\newblock Number theory of matrix semigroups.
\newblock {\em Linear Algebra Appl.}, 434(3):694--711, 2011.
\newblock \href {http://dx.doi.org/10.1016/j.laa.2010.09.028}
  {\path{doi:10.1016/j.laa.2010.09.028}}.

\bibitem[BW13]{baeth-wiegand13}
N.~R. Baeth and R.~Wiegand.
\newblock Factorization theory and decompositions of modules.
\newblock {\em Amer. Math. Monthly}, 120(1):3--34, 2013.
\newblock \href {http://dx.doi.org/10.4169/amer.math.monthly.120.01.003}
  {\path{doi:10.4169/amer.math.monthly.120.01.003}}.

\bibitem[Car60]{carlitz60}
L.~Carlitz.
\newblock A characterization of algebraic number fields with class number two.
\newblock {\em Proc. Amer. Math. Soc.}, 11:391--392, 1960.
\newblock \href {http://dx.doi.org/10.2307/2034782}
  {\path{doi:10.2307/2034782}}.

\bibitem[Cha19]{chapman19}
S.~T. Chapman.
\newblock So what is class number 2?
\newblock {\em Amer. Math. Monthly}, 126(4):330--339, 2019.
\newblock \href {http://dx.doi.org/10.1080/00029890.2019.1562827}
  {\path{doi:10.1080/00029890.2019.1562827}}.

\bibitem[Cha20]{chari20}
S.~Chari.
\newblock Metacommutation of primes in central simple algebras.
\newblock {\em J. Number Theory}, 206:296--309, 2020.
\newblock \href {http://dx.doi.org/10.1016/j.jnt.2019.06.017}
  {\path{doi:10.1016/j.jnt.2019.06.017}}.

\bibitem[CK15]{cohn-kumar15}
H.~Cohn and A.~Kumar.
\newblock Metacommutation of {H}urwitz primes.
\newblock {\em Proc. Amer. Math. Soc.}, 143(4):1459--1469, 2015.
\newblock \href {http://dx.doi.org/10.1090/S0002-9939-2014-12358-6}
  {\path{doi:10.1090/S0002-9939-2014-12358-6}}.

\bibitem[Cla25]{clark-ca}
P.~C. Clark.
\newblock Commutative algebra.
\newblock Online Lecture Notes, 2025.
\newblock
  \url{https://plclark.github.io/PeteLClark/Expositions/integral2015.pdf},
  retrieved 2026-01-23.

\bibitem[Coh71]{cohn71}
P.~M. Cohn.
\newblock Rings of fractions.
\newblock {\em Amer. Math. Monthly}, 78:596--615, 1971.
\newblock \href {http://dx.doi.org/10.2307/2316568}
  {\path{doi:10.2307/2316568}}.

\bibitem[CR81]{curtis-reiner81}
C.~W. Curtis and I.~Reiner.
\newblock {\em Methods of representation theory. {V}ol. {I}}.
\newblock Pure and Applied Mathematics. John Wiley \& Sons, Inc., New York,
  1981.
\newblock With applications to finite groups and orders, A Wiley-Interscience
  Publication.

\bibitem[CR87]{curtis-reiner87}
C.~W. Curtis and I.~Reiner.
\newblock {\em Methods of representation theory. {V}ol. {II}}.
\newblock Pure and Applied Mathematics (New York). John Wiley \& Sons, Inc.,
  New York, 1987.
\newblock With applications to finite groups and orders, A Wiley-Interscience
  Publication.

\bibitem[CS03]{conway-smith03}
J.~H. Conway and D.~A. Smith.
\newblock {\em On quaternions and octonions: their geometry, arithmetic, and
  symmetry}.
\newblock A K Peters, Ltd., Natick, MA, 2003.

\bibitem[CT24]{cossu-tringali24}
L.~Cossu and S.~Tringali.
\newblock Abstract factorization theorems with applications to idempotent
  factorizations.
\newblock {\em Israel J. Math.}, 263(1):349--395, 2024.
\newblock \href {http://dx.doi.org/10.1007/s11856-024-2623-z}
  {\path{doi:10.1007/s11856-024-2623-z}}.

\bibitem[Erd67]{erdos67}
J.~A. Erdos.
\newblock On products of idempotent matrices.
\newblock {\em Glasgow Math. J.}, 8:118--122, 1967.
\newblock \href {http://dx.doi.org/10.1017/S0017089500000173}
  {\path{doi:10.1017/S0017089500000173}}.

\bibitem[Est91]{estes91}
D.~R. Estes.
\newblock Factorization in hereditary orders.
\newblock {\em Linear Algebra Appl.}, 157:161--164, 1991.
\newblock \href {http://dx.doi.org/10.1016/0024-3795(91)90110-I}
  {\path{doi:10.1016/0024-3795(91)90110-I}}.

\bibitem[Fac02]{facchini02}
A.~Facchini.
\newblock Direct sum decompositions of modules, semilocal endomorphism rings,
  and {K}rull monoids.
\newblock {\em J. Algebra}, 256(1):280--307, 2002.
\newblock \href {http://dx.doi.org/10.1016/S0021-8693(02)00164-3}
  {\path{doi:10.1016/S0021-8693(02)00164-3}}.

\bibitem[Fac12]{facchini12}
A.~Facchini.
\newblock Direct-sum decompositions of modules with semilocal endomorphism
  rings.
\newblock {\em Bull. Math. Sci.}, 2(2):225--279, 2012.
\newblock \href {http://dx.doi.org/10.1007/s13373-012-0024-9}
  {\path{doi:10.1007/s13373-012-0024-9}}.

\bibitem[FGS16]{forsyth-gurev-shrima16}
A.~Forsyth, J.~Gurev, and S.~Shrima.
\newblock Metacommutation as a group action on the projective line over {$\Bbb
  F_p$}.
\newblock {\em Proc. Amer. Math. Soc.}, 144(11):4583--4590, 2016.
\newblock \href {http://dx.doi.org/10.1090/proc/13126}
  {\path{doi:10.1090/proc/13126}}.

\bibitem[FH00]{facchini-herbera00}
A.~Facchini and D.~Herbera.
\newblock {$K_0$} of a semilocal ring.
\newblock {\em J. Algebra}, 225(1):47--69, 2000.
\newblock \href {http://dx.doi.org/10.1006/jabr.1999.8092}
  {\path{doi:10.1006/jabr.1999.8092}}.

\bibitem[Fri13]{frisch13}
S.~Frisch.
\newblock A construction of integer-valued polynomials with prescribed sets of
  lengths of factorizations.
\newblock {\em Monatsh. Math.}, 171(3-4):341--350, 2013.
\newblock \href {http://dx.doi.org/10.1007/s00605-013-0508-z}
  {\path{doi:10.1007/s00605-013-0508-z}}.

\bibitem[FW04]{facchini-wiegand04}
A.~Facchini and R.~Wiegand.
\newblock Direct-sum decompositions of modules with semilocal endomorphism
  rings.
\newblock {\em J. Algebra}, 274(2):689--707, 2004.
\newblock \href {http://dx.doi.org/10.1016/j.jalgebra.2003.06.004}
  {\path{doi:10.1016/j.jalgebra.2003.06.004}}.

\bibitem[Ger88]{geroldinger88}
A.~Geroldinger.
\newblock \"{U}ber nicht-eindeutige {Z}erlegungen in irreduzible {E}lemente.
\newblock {\em Math. Z.}, 197(4):505--529, 1988.
\newblock \href {http://dx.doi.org/10.1007/BF01159809}
  {\path{doi:10.1007/BF01159809}}.

\bibitem[Ger09]{geroldinger09}
A.~Geroldinger.
\newblock Additive group theory and non-unique factorizations.
\newblock In {\em Combinatorial number theory and additive group theory}, Adv.
  Courses Math. CRM Barcelona, pages 1--86. Birkh\"auser Verlag, Basel, 2009.
\newblock \href {http://dx.doi.org/10.1007/978-3-7643-8962-8}
  {\path{doi:10.1007/978-3-7643-8962-8}}.

\bibitem[Ger16]{geroldinger16}
A.~Geroldinger.
\newblock Sets of lengths.
\newblock {\em Amer. Math. Monthly}, 123(10):960--988, 2016.
\newblock \href {http://dx.doi.org/10.4169/amer.math.monthly.123.10.960}
  {\path{doi:10.4169/amer.math.monthly.123.10.960}}.

\bibitem[GHK06]{geroldinger-halter-koch06}
A.~Geroldinger and F.~Halter-Koch.
\newblock {\em Non-unique factorizations}, volume 278 of {\em Pure and Applied
  Mathematics (Boca Raton)}.
\newblock Chapman \& Hall/CRC, Boca Raton, FL, 2006.
\newblock Algebraic, combinatorial and analytic theory.
\newblock \href {http://dx.doi.org/10.1201/9781420003208}
  {\path{doi:10.1201/9781420003208}}.

\bibitem[Gry13]{grynkiewicz13}
D.~J. Grynkiewicz.
\newblock {\em Structural additive theory}, volume~30 of {\em Developments in
  Mathematics}.
\newblock Springer, Cham, 2013.
\newblock \href {http://dx.doi.org/10.1007/978-3-319-00416-7}
  {\path{doi:10.1007/978-3-319-00416-7}}.

\bibitem[Gry22]{grynkiewicz22}
D.~J. Grynkiewicz.
\newblock {\em The characterization of finite elasticities---factorization
  theory in {K}rull monoids via convex geometry}, volume 2316 of {\em Lecture
  Notes in Mathematics}.
\newblock Springer, Cham, 2022.
\newblock \href {http://dx.doi.org/10.1007/978-3-031-14869-9}
  {\path{doi:10.1007/978-3-031-14869-9}}.

\bibitem[GS19]{geroldinger-schmid19}
A.~Geroldinger and W.~A. Schmid.
\newblock A characterization of class groups via sets of lengths.
\newblock {\em J. Korean Math. Soc.}, 56(4):869--915, 2019.
\newblock \href {http://dx.doi.org/10.4134/JKMS.j180467}
  {\path{doi:10.4134/JKMS.j180467}}.

\bibitem[GS23]{geroldinger-schmid23}
A.~Geroldinger and W.~A. Schmid.
\newblock On the incomparability of systems of sets of lengths.
\newblock {\em European J. Combin.}, 111:Paper No. 103694, 25, 2023.
\newblock With a preface by Alain Plagne.
\newblock \href {http://dx.doi.org/10.1016/j.ejc.2023.103694}
  {\path{doi:10.1016/j.ejc.2023.103694}}.

\bibitem[GZ17]{geroldinger-zhong17}
A.~Geroldinger and Q.~Zhong.
\newblock A characterization of class groups via sets of lengths {II}.
\newblock {\em J. Th\'{e}or. Nombres Bordeaux}, 29(2):327--346, 2017.
\newblock \href {http://dx.doi.org/10.5802/jtnb.983}
  {\path{doi:10.5802/jtnb.983}}.

\bibitem[GZ20]{geroldinger-zhong20}
A.~Geroldinger and Q.~Zhong.
\newblock Factorization theory in commutative monoids.
\newblock {\em Semigroup Forum}, 100(1):22--51, 2020.
\newblock \href {http://dx.doi.org/10.1007/s00233-019-10079-0}
  {\path{doi:10.1007/s00233-019-10079-0}}.

\bibitem[Kai99]{kainrath99}
F.~Kainrath.
\newblock Factorization in {K}rull monoids with infinite class group.
\newblock {\em Colloq. Math.}, 80(1):23--30, 1999.
\newblock \href {http://dx.doi.org/10.4064/cm-80-1-23-30}
  {\path{doi:10.4064/cm-80-1-23-30}}.

\bibitem[Lam99]{lam99}
T.~Y. Lam.
\newblock {\em Lectures on modules and rings}, volume 189 of {\em Graduate
  Texts in Mathematics}.
\newblock Springer-Verlag, New York, 1999.
\newblock \href {http://dx.doi.org/10.1007/978-1-4612-0525-8}
  {\path{doi:10.1007/978-1-4612-0525-8}}.

\bibitem[Lev00]{levy00}
L.~S. Levy.
\newblock Modules over hereditary {N}oetherian prime rings (survey).
\newblock In {\em Algebra and its applications ({A}thens, {OH}, 1999)}, volume
  259 of {\em Contemp. Math.}, pages 353--370. Amer. Math. Soc., Providence,
  RI, 2000.
\newblock \href {http://dx.doi.org/10.1090/conm/259/04107}
  {\path{doi:10.1090/conm/259/04107}}.

\bibitem[LM25]{leite-machiavelo25}
A.~Leite and A.~Machiavelo.
\newblock On the {Cycle} {Structure} of the {Metacommutation} {Map}.
\newblock Preprint,
  \href{https://arxiv.org/abs/2504.08709}{{arXiv}:2504.08709}, 2025.

\bibitem[LR11]{levy-robson11}
L.~S. Levy and J.~C. Robson.
\newblock {\em Hereditary {N}oetherian prime rings and idealizers}, volume 174
  of {\em Mathematical Surveys and Monographs}.
\newblock American Mathematical Society, Providence, RI, 2011.
\newblock \href {http://dx.doi.org/10.1090/surv/174}
  {\path{doi:10.1090/surv/174}}.

\bibitem[MR01]{mcconnell-robson01}
J.~C. McConnell and J.~C. Robson.
\newblock {\em Noncommutative {N}oetherian rings}, volume~30 of {\em Graduate
  Studies in Mathematics}.
\newblock American Mathematical Society, Providence, RI, revised edition, 2001.
\newblock With the cooperation of L. W. Small.
\newblock \href {http://dx.doi.org/10.1090/gsm/030}
  {\path{doi:10.1090/gsm/030}}.

\bibitem[Nar04]{narkiewicz04}
W.~Narkiewicz.
\newblock {\em Elementary and analytic theory of algebraic numbers}.
\newblock Springer Monographs in Mathematics. Springer-Verlag, Berlin, third
  edition, 2004.
\newblock \href {http://dx.doi.org/10.1007/978-3-662-07001-7}
  {\path{doi:10.1007/978-3-662-07001-7}}.

\bibitem[Rei75]{reiner75}
I.~Reiner.
\newblock {\em Maximal orders}.
\newblock London Mathematical Society Monographs, No. 5. Academic Press
  [Harcourt Brace Jovanovich, Publishers], London-New York, 1975.

\bibitem[Rot17]{rotman17}
J.~J. Rotman.
\newblock {\em Advanced modern algebra. {P}art 2}, volume 180 of {\em Graduate
  Studies in Mathematics}.
\newblock American Mathematical Society, Providence, RI, third edition, 2017.
\newblock With a foreword by Bruce Reznick.
\newblock \href {http://dx.doi.org/10.1090/gsm/180}
  {\path{doi:10.1090/gsm/180}}.

\bibitem[Rum22]{rump22}
W.~Rump.
\newblock Bijective 1-cocycles, braces, and non-commutative prime
  factorization.
\newblock {\em Colloq. Math.}, 170(1):145--170, 2022.
\newblock \href {http://dx.doi.org/10.4064/cm8684-2-2022}
  {\path{doi:10.4064/cm8684-2-2022}}.

\bibitem[Rum25]{rump25}
W.~Rump.
\newblock The role of divisors in noncommutative ideal theory.
\newblock In {\em Recent progress in ring and factorization theory}, volume 477
  of {\em Springer Proc. Math. Stat.}, pages 391--416. Springer, Cham, 2025.
\newblock \href {http://dx.doi.org/10.1007/978-3-031-75326-8_18}
  {\path{doi:10.1007/978-3-031-75326-8_18}}.

\bibitem[RY16]{rump-yang16}
W.~Rump and Y.~Yang.
\newblock Hereditary arithmetics.
\newblock {\em J. Algebra}, 468:214--252, 2016.
\newblock \href {http://dx.doi.org/10.1016/j.jalgebra.2016.08.015}
  {\path{doi:10.1016/j.jalgebra.2016.08.015}}.

\bibitem[Sch16]{schmid16}
W.~A. Schmid.
\newblock Some recent results and open problems on sets of lengths of {K}rull
  monoids with finite class group.
\newblock In {\em Multiplicative ideal theory and factorization theory}, volume
  170 of {\em Springer Proc. Math. Stat.}, pages 323--352. Springer, [Cham],
  2016.
\newblock \href {http://dx.doi.org/10.1007/978-3-319-38855-7_14}
  {\path{doi:10.1007/978-3-319-38855-7_14}}.

\bibitem[Sme13]{smertnig13}
D.~Smertnig.
\newblock Sets of lengths in maximal orders in central simple algebras.
\newblock {\em J. Algebra}, 390:1--43, 2013.
\newblock \href {http://dx.doi.org/10.1016/j.jalgebra.2013.05.016}
  {\path{doi:10.1016/j.jalgebra.2013.05.016}}.

\bibitem[Sme19]{smertnig19}
D.~Smertnig.
\newblock Factorizations in bounded hereditary {N}oetherian prime rings.
\newblock {\em Proc. Edinb. Math. Soc. (2)}, 62(2):395--442, 2019.
\newblock \href {http://dx.doi.org/10.1017/s0013091518000305}
  {\path{doi:10.1017/s0013091518000305}}.

\bibitem[SV19]{smertnig-voight19}
D.~Smertnig and J.~Voight.
\newblock Definite orders with locally free cancellation.
\newblock {\em Trans. London Math. Soc.}, 6(1):53--86, 2019.
\newblock \href {http://dx.doi.org/10.1112/tlm3.12019}
  {\path{doi:10.1112/tlm3.12019}}.

\bibitem[TV06]{tao-vu06}
T.~Tao and V.~Vu.
\newblock {\em Additive combinatorics}, volume 105 of {\em Cambridge Studies in
  Advanced Mathematics}.
\newblock Cambridge University Press, Cambridge, 2006.
\newblock \href {http://dx.doi.org/10.1017/CBO9780511755149}
  {\path{doi:10.1017/CBO9780511755149}}.

\bibitem[Ven24]{vendramin24}
L.~Vendramin.
\newblock Skew braces: a brief survey.
\newblock In {\em Geometric methods in physics XL, workshop, Bia{\l}owie\.za,
  Poland, June 20--25, 2023}, pages 153--175. Cham: Birkh{\"a}user, 2024.
\newblock \href {http://dx.doi.org/10.1007/978-3-031-62407-0_12}
  {\path{doi:10.1007/978-3-031-62407-0_12}}.

\bibitem[ZS75]{zariski-samuel58}
O.~Zariski and P.~Samuel.
\newblock {\em Commutative algebra. {V}ol. 1}.
\newblock Graduate Texts in Mathematics, No. 28. Springer-Verlag, New
  York-Heidelberg-Berlin, 1958 edition, 1975.
\newblock With the cooperation of I. S. Cohen.

\end{thebibliography}

\end{document}